\documentclass[leqno]{amsart}
\usepackage[utf8]{inputenc}

\usepackage{amsmath}
\usepackage{amssymb}
\usepackage{amsthm}
\usepackage{amsfonts}
\usepackage{enumerate}
\usepackage{mathtools}

\usepackage[colorlinks=false]{hyperref}




\usepackage{xy}
\xyoption{all}


\numberwithin{equation}{section}

\theoremstyle{plain}
    \newtheorem{theorem}[equation]{Theorem}
    \newtheorem{lemma}[equation]{Lemma}
    \newtheorem{corollary}[equation]{Corollary}

    \newtheorem*{theorem*}{Theorem}
    \newtheorem*{proposition*}{Proposition}
    \newtheorem*{corollary*}{Corollary}
    \newtheorem*{lemma*}{Lemma}
    \newtheorem*{conjecture*}{Conjecture}
    \newtheorem{definition-theorem}[equation]{Definition/Theorem}
    \newtheorem{definition-lemma}[equation]{Definition/Lemma}
\theoremstyle{definition}
    \newtheorem{definition}[equation]{Definition}
    \newtheorem{example}[equation]{Example}

    \newtheorem{remark}[equation]{Remark}



    \newcommand{\C}{\mathbb{C}}

   	\renewcommand{\phi}{\varphi}
	\let\epsilon\varepsilon

    \newcommand{\Bounded}{\operatorname{B}}
    \newcommand{\CB}{\operatorname{CB}}
    \newcommand{\Compact}{\operatorname{K}}
    \newcommand{\Adjointable}{\operatorname{L}}
    \newcommand{\cb}{\mathrm{cb}}
    \newcommand{\alg}{\mathrm{alg}}
    \newcommand{\functor}{\mathcal}
    \newcommand{\category}{\mathsf}

\newcommand{\id}{\mathrm{id}}
\newcommand{\ol}{\overline}

    \DeclareMathOperator{\Hom}{Hom}

    \DeclareMathOperator{\Res}{Res}
    \DeclareMathOperator{\Ind}{Ind}

    \DeclareMathOperator{\GL}{GL}

	\DeclareMathOperator{\opmod}{OM}
	
	\DeclareMathOperator{\URep}{URep}
	\DeclareMathOperator{\hmod}{HM}
	\DeclareMathOperator{\cmod}{CM}

\title{Frobenius reciprocity and the Haagerup tensor product}
\author{Tyrone Crisp }
\address{Max Planck Institute for Mathematics, Vivatsgasse 7, 53111 Bonn, Germany. }
\address{Department of Mathematics, Radboud University Nijmegen, PO Box 9010, 6500GL  Nijmegen, The Netherlands {\normalfont (current address)}}
\email{t.crisp@math.ru.nl}

\date{July, 2016. Revised February, 2017.}

\begin{document}

\begin{abstract}
In the context of operator-space modules over $C^*$-algebras, we give a complete characterisation of those $C^*$-correspondences whose associated Haagerup tensor product functors admit left adjoints. The characterisation, which builds on previous joint work with N.~Higson, exhibits a close connection between the notions of adjoint operators and adjoint functors. As an application, we prove a Frobenius reciprocity theorem for representations of locally compact groups on operator spaces: the functor of unitary induction for a closed subgroup $H$ of a locally compact group $G$ admits a left adjoint in this setting if and only if $H$ is cocompact in $G$. The adjoint functor is given by Haagerup tensor product with the operator-theoretic adjoint of Rieffel's induction bimodule.
\end{abstract}

\subjclass[2010]{46M15 (22D30, 46L07)}

\maketitle

\section{Introduction}

The reciprocity between induced and restricted group characters  discovered by Frobenius is a cornerstone in the representation theory of finite groups, establishing a useful explicit relationship between the representations of a group and the representations of its subgroups. The computation of the character table of the symmetric groups by Frobenius himself is an early example of the reciprocity law's many applications. See \cite{Curtis} for an account of Frobenius's computation. Turning from finite-dimensional representations of finite groups to infinite-dimensional representations of locally compact groups, Frobenius reciprocity was also a central concern of Mackey in his influential work on induced unitary representations, and various forms of the reciprocity law have been established in this context. See  \cite{Weil},  \cite{Mackey_inducedI}, \cite{Mautner}, \cite{Mackey_inducedII}, \cite{Fell} and \cite{Rosenberg}, for example.

In its strongest formulation, Frobenius reciprocity is the assertion that for a closed subgroup $H$ of a locally compact group $G$, Mackey's unitary induction functor 
\[
\Ind_H^G: \URep(H)\to \URep(G)
\]
from unitary representations of $H$ to unitary representations $G$ is \emph{adjoint}, on both sides, to the restriction functor $\Res^G_H$. That is, there are natural isomorphisms between the spaces of equivariant bounded operators
\begin{equation*}\label{eq:intro_frob}
\Bounded_G(K,\Ind_H^G L) \cong \Bounded_H(\Res^G_H K, L) \quad \text{and}\quad \Bounded_G(\Ind_H^G L, K) \cong \Bounded_H(L, \Res^G_H K)
\end{equation*}
for all unitary representations $K\in \URep(G)$ and $L\in\URep(H)$. 

This formulation is easily seen to be false, in general; indeed, it frequently happens that neither $\Ind_H^G$ nor $\Res^G_H$ admits any adjoint at all. This is the case, for instance, when $G$ is infinite (e.g. compact and non-discrete) and $H$ is the trivial subgroup, and it is instructive to see why.

Suppose, say, that the induction functor $\Ind_{1}^G$, which sends a Hilbert space $L$ to the $G$-representation $L^2(G)\otimes L$, had a left adjoint $\functor{L}:\URep(G)\to \URep(1)$. Then we would have in particular an isomorphism 
\[
\Bounded_{\C}(\functor{L}L^2(G),\C ) \cong \Bounded_G(L^2(G), L^2(G)).
\]
This is, a priori, an isomorphism of sets, but the continuity of the induction functor  and the naturality of the isomorphisms ensure that it is in fact an isomorphism of Banach spaces. Thus  $\functor{L}L^2(G)$ is isomorphic, as a Banach space, to the Fourier algebra $A(G)$ (the predual of the von Neumann algebra of $G$; see \cite{Eymard}). In particular, $\functor{L}L^2(G)$ is not a Hilbert space.

This example suggests that  if we are to insist on the adjoint functor formulation of Frobenius reciprocity---and there are good reasons for doing so; see below---then we must enlarge the category $\URep$ to include representations on more general Banach spaces. One option is to consider representations on arbitrary Banach spaces, with Mackey's unitary induction functor replaced by a suitable $L^1$ variant. Very satisfactory results in this context were obtained in \cite{Moore} and \cite{Rieffel_Banach}. 

The category of all Banach representations of $G$ is generally much larger than $\URep(G)$. In this paper we shall consider instead the category of \emph{operator modules} over the group $C^*$-algebra $C^*(G)$, and over $C^*$-algebras more generally. This category provides a natural venue for adjoint functor techniques, while remaining close enough to the Hilbert space setting so that theorems about operator modules yield theorems about Hilbert space representations.

Concretely, a (left) operator module over  $C^*(G)$  is a norm-closed linear subspace  $X\subseteq\Bounded(K)$, where $K$ is a unitary representation of $G$, such that $g\circ x\in X$ for every $g\in G$ and every $x\in X$. See \cite{BLM} for details, or Section \ref{sec:prelims} for a brief review. The category $\opmod(A)$ of operator modules over an arbitrary $C^*$-algebra $A$ contains as a subcategory the category $\hmod(A)$ of $*$-representations of $A$ on Hilbert space; in particular, $\opmod(C^*(G))$ contains $\URep(G)$. The category $\opmod(A)$ also contains the category $\cmod(A)$ of (left) Hilbert $C^*$-modules over $A$, so  for instance  $A$ itself is an object in $\opmod(A)$. A theorem of Blecher \cite{Blecher} asserts that $\opmod(A)$ remembers both of these subcategories: every  suitably continuous  categorical equivalence $\opmod(A)\cong\opmod(B)$ comes from a (strong) Morita equivalence between $A$ and $B$, and hence restricts to equivalences $\hmod(A)\cong\hmod(B)$ and $\cmod(A)\cong\cmod(B)$. Another result along these lines, proved in \cite{Crisp-Higson}, asserts that if $A$ is a type-$\mathrm{I}$ $C^*$-algebra, then $\hmod(A)$ and $\opmod(A)$ contain the same irreducible objects.

We shall study functors of the form 
\begin{equation}\label{eq:intro_F}
\functor{F}:\opmod(B)\to \opmod(A) \qquad X\mapsto F\otimes^h_B X
\end{equation}
where $A$ and $B$ are $C^*$-algebras,   $F$ is a \emph{$C^*$-correspondence} from   $A$ to  $B$, and $\otimes^h$ denotes the \emph{Haagerup tensor product} (the terminology will be explained in Section \ref{sec:prelims}). Blecher has shown that this functor $\functor{F}$, restricted to the subcategories $\hmod$ of Hilbert-space representations, coincides with the functor of interior $C^*$-module tensor product, as studied by Rieffel \cite{Rieffel_induced}. 

Rieffel showed that our motivating example, unitary induction of group representations, is a functor of this sort. As this example demonstrates, the tensor product functors associated to $C^*$-correspondences frequently do not have adjoints in the setting of Hilbert space representations. But in the operator module setting, functors of the form \eqref{eq:intro_F} always have `operator-theoretic adjoints': the $A$-$B$ bimodule $F$ can be concretely represented as a set of Hilbert space operators, and each operator $f\in F$ has a uniquely defined adjoint operator $f^*$ characterised by the usual equation:
\begin{equation}\label{eq:intro_opadj}
\langle y| fx \rangle = \langle f^* y |x\rangle.
\end{equation}
The set $F^*=\{f^*\ |\ f\in F\}$ is an operator $B$-$A$ bimodule, and gives rise to a Haagerup tensor product functor 
\[
\functor{F}^*:\opmod(A)\to \opmod(B) \qquad Y\mapsto F^*\otimes^h_A Y.
\]
The functor $\functor{F}^*$ sends Hilbert $C^*$-modules to Hilbert $C^*$-modules, but it does not necessarily send Hilbert spaces to Hilbert spaces. 

In \cite{Crisp-Higson}  it was shown that if the action of $A$ on $F$ is by compact operators (in the sense of Hilbert $C^*$-modules), then the operator-theoretic adjoint functor $\functor{F}^*$ is an actual left adjoint to $\functor{F}$ in the sense of category theory: there are natural isomorphisms between the spaces of completely bounded module maps
\begin{equation}\label{eq:intro_adjunction}
\CB_A(Y,\functor{F}X) \cong \CB_B(\functor{F}^* Y, X)
\end{equation}
for all $X\in \opmod(B)$ and all $Y\in \opmod(A)$. The main result of this paper is a strong converse to this assertion: 

{
\renewcommand{\theequation}{\ref{thm:adj_main}}
\begin{theorem}
 If $\functor{F}$ admits any left adjoint at all, then that adjoint is $\functor{F}^*$, and $A$ acts on $F$ by compact operators.
\end{theorem}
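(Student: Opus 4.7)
The plan proceeds via a reduction followed by a Yoneda-type identification. By uniqueness of adjoints, any two left adjoints of $\functor{F}$ are naturally isomorphic; combined with the forward implication from \cite{Crisp-Higson} (that $\functor{F}^*$ is a left adjoint whenever $A$ acts by compact operators), the theorem reduces to the single assertion that if $\functor{F}$ admits any left adjoint $\mathcal{L}: \opmod(A) \to \opmod(B)$, then the left-action map $\rho: A \to \Adjointable_B(F)$ has image in $\Compact_B(F)$.

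I would analyze the unit $\eta_A: A \to F \otimes^h_B \mathcal{L}(A)$ of the adjunction at the object $A \in \opmod(A)$. Assuming first that $A$ is unital (the non-unital case will require additional arguments via unitizations, multiplier algebras, and approximate units, adapting techniques from \cite{Crisp-Higson}), the element $\xi_0 := \eta_A(1_A) \in F \otimes^h_B \mathcal{L}(A)$ encodes the whole adjunction, in the sense that
\[
\CB_B(\mathcal{L}(A), X) \xrightarrow{\sim} F \otimes^h_B X, \qquad \phi \mapsto (\id_F \otimes \phi)(\xi_0).
\]

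The key step is to specialize at $X = F^*$. Using the standard identifications $F \otimes^h_B F^* \cong \Compact_B(F)$ (the Haagerup-tensor description of the compacts on a Hilbert $C^*$-module) and $\CB_A(A, \Compact_B(F)) \cong \Compact_B(F)$, the adjunction gives a bijection $\CB_B(\mathcal{L}(A), F^*) \cong \Compact_B(F)$. Taking $X = B$ similarly identifies the ``$B$-dual'' $\CB_B(\mathcal{L}(A), B)$ with $F$. Together these should pin down $\mathcal{L}(A)$ as $F^*$ by Yoneda-type reasoning, and identify $\xi_0$ as the element of $F \otimes^h_B F^* = \Compact_B(F)$ corresponding to $\rho(1_A) \in \Adjointable_B(F)$. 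The $A$-linearity of $\eta_A$ then propagates: for every $a \in A$, $\eta_A(a) = a \xi_0$ corresponds to $\rho(a) \in \Compact_B(F)$, which is the compactness claim.

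The main obstacle is the Yoneda-type step: the adjunction supplies a natural bijection $\CB_B(\mathcal{L}(A), X) \cong F \otimes^h_B X$, but matching this against a functor of the form $\CB_B(F^*, -)$ (which represents $\functor{F}$ in the manner required for the conclusion exactly when $A$ acts compactly) risks being circular. I expect that a direct construction of mutually inverse operator-module maps $\mathcal{L}(A) \leftrightarrow F^*$ will be needed, using the $B$-valued pairing between $F$ and $F^*$ together with the universal property of the Haagerup tensor product, with care to track completeness and operator-space structure throughout. The non-unital case will add further technical layers that I expect to handle along the lines of the multiplier-algebra devices already developed in \cite{Crisp-Higson}.
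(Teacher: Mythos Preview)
Your reduction is fine, and focusing on the unit $\eta_A: A \to F\otimes^h_B \mathcal{L}(A)$ is the right starting point. But the Yoneda-type step is not merely at risk of circularity---it \emph{is} circular. You want to identify $\mathcal{L}(A)$ with $F^*$ by matching the natural isomorphism $\CB_B(\mathcal{L}(A),X)\cong F\otimes^h_B X$ against the functor $\CB_B(F^*,-)$. But $\CB_B(F^*,X)\cong F\otimes^h_B X$ holds precisely when $\functor{F}^*$ is left adjoint to $\functor{F}$ (evaluated at $Y=A$), which by the forward implication is exactly the compactness condition you are trying to prove. Your fallback---a direct construction of mutually inverse maps $\mathcal{L}(A)\leftrightarrow F^*$---is not spelled out, and any such construction will run into the same wall: surjectivity of $\mathcal{L}(A)\to F^*$ (or injectivity in the other direction) encodes the fact that $\id_F$ can be approximated by elements of $F\otimes^h_B F^*\cong\Compact_B(F)$, which is again the conclusion.

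The paper's argument bypasses this entirely: it never attempts to identify $\mathcal{L}(A)$. Writing $L=\mathcal{L}(A)$, it uses only the single isomorphism $\Phi:\CB_B(L,B)\xrightarrow{\cong}\CB_A(A,F)$, given by $\Phi(t)=(\id_F\otimes t)\circ\eta_A$, together with the observation that $F\otimes^h_B L$ is automatically a nondegenerate left $\Compact_B(F)$-module (since $F$ is). If $\{k_\lambda\}$ is a bounded approximate unit for $\Compact_B(F)$, then $(k_\lambda\otimes\id_L)\eta_A(a)\to\eta_A(a)$ in norm for each $a\in A$. Inverting $\Phi$ to write each evaluation map $r_f:a\mapsto\alpha(a)f$ as $(\id_F\otimes t_f)\circ\eta_A$ with $\|t_f\|\leq\|\Phi^{-1}\|\cdot\|f\|$, one gets
\[
\|(\alpha(a)-k_\lambda\alpha(a))f\|\leq \|\Phi^{-1}\|\cdot\|\eta_A(a)-(k_\lambda\otimes\id_L)\eta_A(a)\|\cdot\|f\|,
\]
so $k_\lambda\alpha(a)\to\alpha(a)$ in operator norm, forcing $\alpha(a)\in\Compact_B(F)$. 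This works uniformly---no unital hypothesis, no unitization or multiplier-algebra digression is needed. The missing idea in your proposal is precisely this approximate-unit estimate, which replaces the (unavailable) identification of $L$ with a quantitative use of the single element $\eta_A(a)$.
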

\addtocounter{equation}{-1}
} 
The theorem shows that in the context of operator modules there is a close and explicit connection between the notions of adjoint operators and adjoint functors, going beyond the obvious notational coincidence between \eqref{eq:intro_opadj} and \eqref{eq:intro_adjunction}. See \cite{MacLane_Stone} and \cite{Palmquist} for other parallels between the two notions, in other settings. 

The question of when $\functor{F}$ admits a \emph{right} adjoint remains open.  This is perhaps surprising, since in purely algebraic situations tensor product functors  always have right adjoints, given by $\Hom$. But in functional-analytic settings one is often faced with a multitude of different tensor products, and the lack of an internal $\Hom$ functor---for instance, the space of bounded linear maps between two Hilbert spaces is seldom itself a Hilbert space---and this makes the `standard' $\otimes$-$\Hom$ adjunction a rather delicate matter. See \cite[Proposition 3.5.9]{BLM} for an example involving the \emph{projective} tensor product of operator spaces; this tensor product does not, in general, send operator modules to operator modules.

Several corollaries to the main theorem are presented in Section \ref{sec:cor}. For example:

{
\renewcommand{\theequation}{\ref{cor:unitary_frob}}
\begin{corollary}
For a locally compact group $G$ with closed subgroup $H$, the unitary induction functor
\[
\Ind_H^G:\opmod(C^*(H)) \to \opmod(C^*(G))
\]
has a left adjoint if and only if $H$ is cocompact in $G$, while the restriction functor
\[
\Res^G_H:\opmod(C^*(G))\to \opmod(C^*(H))
\]
has a left adjoint if and only if $H$ is open in $G$.
\end{corollary}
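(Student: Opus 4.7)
The plan is to realize both $\Ind_H^G$ and $\Res^G_H$ as Haagerup tensor product functors of the form \eqref{eq:intro_F}, and then to combine Theorem \ref{thm:adj_main} with the earlier Crisp--Higson sufficient condition. Together these yield: such a functor $\functor{F}$ admits a left adjoint if and only if the left-acting $C^*$-algebra acts on $F$ by compact operators in the Hilbert-module sense. It then remains to translate each compactness condition into the group-theoretic statement of the corollary.

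For induction, Rieffel's theorem (cited in the introduction) identifies $\Ind_H^G$ with $\functor{E}$, where $E = E_H^G$ is Rieffel's induction bimodule, a $C^*(G)$-$C^*(H)$-correspondence. By the imprimitivity theorem one has $\Compact_{C^*(H)}(E) \cong C_0(G/H) \rtimes G$, and the left action of $C^*(G)$ on $E$ coincides with the canonical nondegenerate $*$-homomorphism $C^*(G) \to M(C_0(G/H) \rtimes G)$. This map takes values in $C_0(G/H) \rtimes G$ itself---that is, $C^*(G)$ acts on $E$ by compact operators---precisely when the unit of $M(C_0(G/H))$ lies in $C_0(G/H)$, which is equivalent to $G/H$ being compact.

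For restriction, the natural model is $\Res^G_H \cong \functor{F}$, where $F = C^*(G)$ is viewed as a $C^*(H)$-$C^*(G)$-correspondence: the right Hilbert $C^*(G)$-module structure is the standard one, and the left $C^*(H)$-action is given by the canonical nondegenerate $*$-homomorphism $\phi \colon C^*(H) \to M(C^*(G))$. Because $\Compact_{C^*(G)}(C^*(G)) = C^*(G)$, the compactness criterion becomes $\phi(C^*(H)) \subseteq C^*(G)$. On the dense subalgebra $L^1(H) \subseteq C^*(H)$, $\phi$ sends $f$ to the Radon measure on $G$ obtained by pushing $f\, d\mu_H$ forward along $H \hookrightarrow G$; this measure is absolutely continuous with respect to $\mu_G$---equivalently, lies in $L^1(G) \subseteq C^*(G)$---if and only if $H$ has positive $\mu_G$-measure, which for a closed subgroup holds if and only if $H$ is open. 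A density argument extends this equivalence to all of $C^*(H)$.

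The main obstacle I anticipate is verifying that the candidate bimodule $F$ genuinely implements $\Res^G_H$ as a Haagerup tensor product functor in the operator-module category---that is, checking the natural isomorphism $C^*(G) \otimes^h_{C^*(G)} X \cong X$ with its restricted $C^*(H)$-action---and that the imprimitivity-theoretic identification $\Compact_{C^*(H)}(E_H^G) \cong C_0(G/H) \rtimes G$, classical in the Hilbert-module setting, carries over correctly to the operator-module framework used in the paper.
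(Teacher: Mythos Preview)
Your overall approach matches the paper's: realize each functor as a Haagerup tensor product with a correspondence and apply Theorem~\ref{thm:adj_main}. For induction you invoke Rieffel's imprimitivity theorem to identify $\Compact_{C^*(H)}(E)$ with $C_0(G/H)\rtimes G$, exactly as the paper does. For restriction you use $F = C^*(G)$ with left $C^*(H)$-action via $\phi\colon C^*(H)\to M(C^*(G))$, which is the paper's Corollary~\ref{cor:pullback}. The obstacles you flag are not real: the isomorphism $B\otimes^h_B X \cong X$ for nondegenerate $X$ is the standard fact recorded in Section~\ref{subsec:haag}, and Rieffel's identification of the compacts is a pure Hilbert-module statement, independent of the ambient operator-module framework.

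There is, however, a genuine gap in your restriction argument. You need the equivalence ``$\phi(C^*(H))\subseteq C^*(G)$ if and only if $H$ is open in $G$,'' and you argue via absolute continuity: for $f\in L^1(H)$ the pushed-forward measure $f\,d\mu_H$ lies in $L^1(G)$ if and only if $H$ has positive Haar measure in $G$. This establishes the ``if'' direction cleanly, but not the ``only if''. When $H$ is not open, the pushed-forward measure indeed fails to lie in $L^1(G)$; but the criterion you need is membership in $C^*(G)$, which is the $C^*$-completion of $L^1(G)$ and is strictly larger. Nothing in your measure-theoretic argument excludes the possibility that $\phi(f)$ lies in $C^*(G)\setminus L^1(G)$. (Consider $G=\R$, $H=\{0\}$: the Dirac mass at $0$ is not in $L^1(\R)$, and to see that it is also not in $C^*(\R)\cong C_0(\widehat{\R})$ one needs its Fourier transform, not absolute continuity.) Your ``density argument'' only passes from $L^1(H)$ to $C^*(H)$ on the source side; it does not bridge the gap between $L^1(G)$ and $C^*(G)$ on the target side. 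The paper closes this gap by citing a theorem of Bekka for precisely this equivalence.
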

\addtocounter{equation}{-1}
}
To return to  the example $\Ind_1^G$ of induction from the trivial subgroup, this functor has a left adjoint in the operator module setting if and only if $G$ is compact. As predicted, the adjoint $(\Ind_1^G)^*$ sends the regular representation $L^2(G)$ to the Fourier algebra of $G$. This last assertion is in fact true regardless of whether $G$ is compact, and it suggests that the operator-adjoint functors $(\Ind_H^G)^*$ may have a useful role to play in  harmonic analysis. 
 
A second corollary of Theorem \ref{thm:adj_main} concerns the relationship between the categories $\opmod$ and $\hmod$ as regards adjoint correspondences.
{
\renewcommand{\theequation}{\ref{cor:adjcorresp}}
\begin{corollary}
Consider the Haagerup tensor product functors $\functor{F}:\opmod(B)\to\opmod(A)$ and $\functor{E}:\opmod(A)\to\opmod(B)$ associated to a pair of $C^*$-correspondences. If one of these functors is left-adjoint to the other, then the restricted functors $\functor{F}_{\hmod}:\hmod(B)\to\hmod(A)$ and $\functor{E}_{\hmod}:\hmod(A)\to\hmod(B)$ are two-sided adjoints. 
\end{corollary}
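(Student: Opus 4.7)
The plan is to leverage Theorem \ref{thm:adj_main} together with the symmetry afforded by the Hilbert-space adjoint of intertwiners. By the main theorem we may assume without loss of generality that $\functor{E}$ is the left adjoint, so that $\functor{E}=\functor{F}^*$ and $A$ acts on $F$ by compact operators. Write $\phi_{X,Y}:\CB_A(\functor{F}X,Y)\xrightarrow{\sim}\CB_B(X,\functor{F}^*Y)$ for the associated adjunction. Blecher's theorem (recalled in the introduction) identifies the restriction of $\functor{F}$ to $\hmod(B)$ with Rieffel's interior tensor product, so both $\functor{F}$ and $\functor{F}^*$ preserve the subcategories $\hmod$. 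Since bounded module maps between Hilbert-space representations are automatically completely bounded, $\phi$ restricts directly to an isomorphism $\Hom_A(\functor{F}X,Y)\cong\Hom_B(X,\functor{F}^*Y)$ on $\hmod$, establishing $\functor{F}^*_{\hmod}$ as a left adjoint to $\functor{F}_{\hmod}$.

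To obtain the second (right-adjoint) direction, I would exploit the Hilbert-space adjoint on morphism spaces: for $*$-representations, $T\mapsto T^*$ is a well-defined conjugate-linear bijection between $\Hom_A(Y,\functor{F}X)$ and $\Hom_A(\functor{F}X,Y)$, since the adjoint of an intertwiner between $*$-representations is again an intertwiner. Composing $\phi$ with this involution on both sides yields a linear bijection
\[
\psi_{Y,X}:\Hom_A(Y,\functor{F}X)\to\Hom_B(\functor{F}^*Y,X),\qquad \psi_{Y,X}(T)=\phi_{X,Y}(T^*)^*,
\]
and once the naturality of $\psi$ is verified this realises $\functor{F}^*_{\hmod}$ simultaneously as a right adjoint to $\functor{F}_{\hmod}$, yielding the desired two-sided adjunction.

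Naturality of $\psi$ is the one step requiring genuine verification, and I expect it to follow routinely from the naturality of $\phi$ combined with the anti-naturality of $*$, provided one knows the identity $\functor{F}(f)^*=\functor{F}(f^*)$ for a morphism $f$ in $\hmod(B)$ (and symmetrically for $\functor{F}^*$). This identity reduces to a one-line inner-product calculation on elementary tensors in the interior tensor product $F\otimes_B X$: the formula $\langle\xi\otimes x,\eta\otimes y\rangle=\langle x,\langle\xi,\eta\rangle_B y\rangle$ immediately gives $(\mathrm{id}_F\otimes f)^*=\mathrm{id}_F\otimes f^*$. I do not foresee any serious obstacle; the essential content of the corollary is that $\hmod$ carries an involutive $*$-duality on morphism spaces which has no counterpart in $\opmod$, and it is precisely this extra structure that forces adjunctions in $\hmod$ to pair up symmetrically, while Theorem \ref{thm:adj_main} retains a genuinely one-sided character in the larger category.
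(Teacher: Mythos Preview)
Your approach is sound, and it takes a somewhat different route from the paper's. The paper deduces from Theorem~\ref{thm:adj_main} and uniqueness of adjoints that $E\cong F^*$ as operator $B$-$A$ bimodules, and then simply invokes \cite[Theorem~3.15]{CCH2}, which asserts that this bimodule isomorphism alone---with no compactness hypothesis---already forces $\functor{E}_{\hmod}$ and $\functor{F}_{\hmod}$ to be two-sided adjoints. Your argument instead restricts the $\opmod$-adjunction to $\hmod$ to obtain one side, and then uses the $*$-involution on intertwiners of $*$-representations to manufacture the other; this is more self-contained (it essentially reproves the cited result from \cite{CCH2} in the special case at hand), at the cost of passing through the compactness condition that the cited theorem shows to be unnecessary for the $\hmod$ conclusion.

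Two points need tightening. First, the displayed isomorphism $\phi_{X,Y}:\CB_A(\functor{F}X,Y)\xrightarrow{\sim}\CB_B(X,\functor{F}^*Y)$ is the adjunction for $\functor{F}$ \emph{left} adjoint to $\functor{F}^*$, not for $\functor{F}^*$ left adjoint to $\functor{F}$ as your opening sentence stipulates; the labels ``left'' and ``right'' are reversed throughout, though the underlying logic (one adjunction plus the $*$-involution yields the other) is unaffected. Second, Blecher's theorem does not directly show that $\functor{F}^*$ preserves $\hmod$: the adjoint bimodule $F^*$ is in general \emph{not} a $C^*$-correspondence from $B$ to $A$ (cf.\ the remarks in Section~\ref{subsec:corresp}). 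What saves you here is the hypothesis that $E$ \emph{is} a correspondence, so $\functor{E}$ preserves $\hmod$ by Theorem~\ref{thm:Rieffel}, and the natural isomorphism $\functor{E}\cong\functor{F}^*$ then transports this property to $\functor{F}^*$. The identity $\functor{F}^*(g)^*=\functor{F}^*(g^*)$ that you need for naturality of $\psi$ should likewise be verified via this isomorphism with $\functor{E}$, rather than by appealing to an inner-product formula on $F^*\otimes_A Y$ that may not exist. With these adjustments the argument goes through.
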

\addtocounter{equation}{-1}
}

Our interest in adjoint functor techniques for unitary representations arose from a study of parabolic induction for representations of real reductive groups. In \cite{CCH1} and \cite{CCH2} we showed that the parabolic induction functor $\Ind_P^G:\URep(L)\to \URep(G)$ associated to a Levi factor $L$ of a parabolic subgroup $P$ of a real reductive group $G$ admits a two-sided adjoint. This is reminiscent of a theorem of Bernstein \cite{Bernstein}, which asserts that in the related setting of smooth representations of $p$-adic reductive groups, parabolic induction admits both a left and a right adjoint. In \cite{Crisp-Higson} we showed that the natural extension of $\Ind_P^G$ to a functor on operator modules possesses a left adjoint. In view of Bernstein's theorem, and of the results of \cite{CCH1} and \cite{CCH2}, it is natural to ask whether that same functor also admits a right adjoint.
{
\renewcommand{\theequation}{\ref{cor:parabolic}}
\begin{corollary}
If $G$ is a real reductive group, and $L$ is a Levi factor of a proper parabolic subgroup $P$ of $G$, then the parabolic induction functor 
\[
\Ind_P^G:\opmod(C^*_r(L))\to \opmod(C^*_r(G))
\]
studied in \cite{Crisp-Higson} does not have a strongly continuous right adjoint. 
\end{corollary}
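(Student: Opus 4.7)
The plan is to derive a contradiction via Theorem \ref{thm:adj_main}. Suppose that $\functor{F} = \Ind_P^G$ admits a strongly continuous right adjoint $\functor{R}\colon \opmod(C^*_r(G))\to \opmod(C^*_r(L))$. The strong continuity assumption, via a Blecher-type Eilenberg--Watts representation theorem for operator modules, ensures that $\functor{R}$ is of Haagerup tensor product form: $\functor{R}\cong \functor{E}$ for some $C^*$-correspondence $E$ from $C^*_r(L)$ to $C^*_r(G)$. The right adjoint hypothesis then reads: $\functor{E}$ admits $\functor{F}$ as a left adjoint.

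Applying Theorem \ref{thm:adj_main} to $\functor{E}$ identifies the left adjoint of $\functor{E}$ with $\functor{E}^*$, forcing $E\cong F^*$, and moreover forces the left action of $C^*_r(L)$ on $F^*$ to be by compact operators. Consequently $\functor{F}^*$ is simultaneously a left and a right adjoint to $\functor{F}$. Applying Corollary \ref{cor:adjcorresp} to this adjoint pair then shows that the restriction $\functor{F}^*\vert_{\hmod}$ must land in Hilbert-space representations: $F^*\otimes^h_{C^*_r(G)} Y$ is a Hilbert-space $C^*_r(L)$-representation for every $Y\in \hmod(C^*_r(G))$.

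The contradiction --- and the main obstacle in the proof --- consists in exhibiting a specific Hilbert-space $G$-representation $Y$ for which this fails. The natural test case is the regular representation $Y = L^2(G)$: generalising the observation in the introduction that $(\Ind_1^G)^*L^2(G)$ is isomorphic to the Fourier algebra $A(G)$, a direct Plancherel-theoretic calculation with the parabolic induction bimodule should identify $\functor{F}^*(L^2(G))$ with a Fourier-algebra-type operator $C^*_r(L)$-module. Since $G$ is necessarily non-compact (a proper parabolic of a real reductive group forces this), such an object fails to carry a Hilbert-space structure, producing the desired contradiction. Carrying out this Plancherel calculation rigorously, and certifying the failure of Hilbert-space structure, is the delicate step in this argument.
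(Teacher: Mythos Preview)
Your argument has a genuine gap and misses the direct route the paper takes.

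The first problem is your Step~1: you invoke a ``Blecher-type Eilenberg--Watts theorem'' to conclude that the hypothetical right adjoint $\functor{R}$ is given by Haagerup tensor product with a $C^*$-\emph{correspondence} $E$. No such theorem is available here. Blecher's result cited in the paper concerns categorical \emph{equivalences}, not arbitrary strongly continuous functors, and even an Eilenberg--Watts statement for operator modules would at best produce an operator \emph{bimodule}, not a $C^*$-correspondence. Theorem~\ref{thm:adj_main} does not apply to such a bimodule, so your appeal to it in Step~2 is unjustified.

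The paper avoids this entirely by exploiting a fact you overlooked (stated just before Corollary~\ref{cor:parabolic}): the adjoint bimodule $F^* = C^*_r(N\backslash G)$ is \emph{itself} a $C^*$-correspondence, from $C^*_r(L)$ to $C^*_r(G)$, by the results of \cite{CCH1,CCH2}. One may therefore apply Theorem~\ref{thm:adj_main} directly with this correspondence in the role of ``$F$''. Since $(F^*)^* \cong F$, condition~(b) of that theorem for $F^*$ reads precisely: $\functor{F} = \Ind_P^G$ has a strongly continuous right adjoint. By the theorem this is equivalent to condition~(d): $C^*_r(L)$ acts on $C^*_r(N\backslash G)$ by $C^*_r(G)$-compact operators. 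The explicit computations in \cite{CCH1} show this fails whenever $P$ is proper, and the proof is complete.

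Note also that your proposed contradiction in Step~4 is both unnecessary and incomplete. Even within your own argument, Step~2 already yields the statement ``$C^*_r(L)$ acts on $F^*$ by compact operators''; citing \cite{CCH1} at that point would finish. The detour through Corollary~\ref{cor:adjcorresp} and a Plancherel computation for $\functor{F}^*(L^2(G))$ is superfluous, and you do not actually carry it out.
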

\addtocounter{equation}{-1}
}
In particular, there is no `second adjoint theorem', in the spirit of \cite{Bernstein}, for operator modules over $C^*$-algebras. (Cf.~\cite[Section 4.3]{Crisp-Higson} for a second adjoint theorem involving non-self-adjoint operator algebras, and  \cite{CH-HC} for a second adjoint theorem in the setting of Fr\'echet modules.)

Let us conclude this introduction with a few words on our motivation for insisting on adjoint functors. If $\functor{F}:\category{A}\to\category{B}$ is a functor with a left adjoint $\functor{F}^*:\category{B}\to\category{A}$, then the composite functor $\functor{M}=\functor{F}\functor{F}^*$ on $\category{B}$ is a \emph{monad} (see \cite[Chapter VI]{MacLane}). Various \emph{monadicity theorems} assert that under favourable conditions, the image of $\functor{F}$ can be characterised as precisely those objects of $\category{B}$ which can be equipped with   a module structure   over $\functor{M}$, and  that the category of $\functor{M}$-modules is equivalent to $\category{A}$.  

This calls to mind the imprimitivity theorem of Mackey \cite{Mackey_imprimitivity} and its many $C^*$-algebraic generalisations, beginning with the work of Rieffel \cite{Rieffel_induced}. The imprimitivity theorem characterises the image of the unitary induction functor $\Ind_H^G$ as those representations of $G$ which are modules over the crossed product $C_0(G/H)\rtimes G$, and asserts moreover that this crossed product is Morita equivalent to $C^*(H)$. Indeed, Mackey's theorem can be interpreted as a monadicity theorem for the functor $\Ind_H^G$ in the operator module setting. It is interesting to note, though,  that Mackey's theorem and its generalisations are valid even in cases where one does not have an adjoint functor in the sense of category theory: the operator-theoretic adjoint $\functor{F}^*$ is enough. These matters, and the so far unexplored \emph{co}monadic counterpart to Rieffel's theory, will be taken up in a future work.

\section{Preliminaries from operator theory and category theory}\label{sec:prelims}

In this section we shall review some basic notions regarding operator modules and adjoint functors, referring   the reader to the books \cite{BLM} and \cite{MacLane} for more information (and for references to the primary sources).

\subsection{Operator modules over $\boldsymbol {C^*}$-algebras}\label{subsec:opmod}

An \emph{operator space} is a norm-closed linear space $X\subseteq \Bounded(H)$ of bounded operators on some complex Hilbert space. For each $n\geq 1$ the operator norm on $M_n(\Bounded(H))\cong \Bounded(H^n)$ restricts to a norm $\|\cdot\|_n$ on $M_n(X)$. A linear map between operator spaces $t:X\to Y$ is \emph{completely bounded} if the maps $t_n:M_n(X)\to M_n(Y)$ defined by applying $t$ entrywise are bounded uniformly in $n$: i.e., if the norm $\|t\|_{\cb}\coloneqq \sup_{n} \| t_n\|$
is finite. Such a map $t$ is \emph{completely contractive} if $\|t\|_{\cb}\leq 1$, and $t$ is \emph{completely isometric} if each $t_n$ is an isometry.   The space $\CB(X,Y)$ of completely bounded maps from $X$ to $Y$ is  a Banach space in the norm $\|\cdot\|_{\cb}$ (it is in fact an operator space in its own right). An isomorphism of operator spaces means, for us, a completely bounded linear isomorphism with completely bounded inverse.

Let $A$ be a $C^*$-algebra, and let $X$ be an operator space which is also a left $A$-module. We say that $X$ is an \emph{operator module} over $A$ if for every $n\geq 1$, the action of the $C^*$-algebra $M_n(A)$ on $M_n(X)$ by matrix multiplication satisfies 
\[
\| ax\|_n \leq \|a\|\cdot \|x\|_n.
\]
The concepts of right operator module, and operator bimodule, are defined analogously. 

For any operator (or Banach) module $X$ over $A$, the \emph{nondegenerate part} of $X$ is defined as 
\[
A\cdot X \coloneqq \{ax\ |\ a\in A,\ x\in X\}.
\]
This is a closed $A$-submodule of $X$, by the Cohen factorisation theorem. We say that $X$ is \emph{nondegenerate} if $A\cdot X=X$. 

We denote by $\opmod(A)$ the category whose objects are the  nondegenerate left operator $A$-modules, with morphisms $\CB_A(X,Y)$ the completely bounded, $A$-linear maps.

\subsection{Adjoint operator modules}
If $X$ is an operator $A$-$B$ bimodule, then the (operator-theoretic) \emph{adjoint} of $X$ is the operator $B$-$A$ bimodule $X^*$ defined as follows. As a vector space, $X^*$ is the complex conjugate of $X$ (in particular, $X^*$ does \emph{not} denote the Banach space dual of $X$). The bimodule structure on $X^*$ is defined by 
\begin{equation}\label{eq:opadj_bimod} 
b\cdot x^* \cdot a \coloneqq (a^*xb^*)^*,
\end{equation}
where $x\mapsto x^*$ is the canonical anti-linear isomorphism $X\to X^*$. For each $n$ one defines a norm on $M_n(X^*)$ by composing the given norm on $M_n(X)$ with the conjugate-transpose map $M_n(X^*)\to M_n(X)$; these norms make $X^*$ into an operator $B$-$A$ bimodule. See \cite[1.2.25 \& 3.1.16]{BLM} for details; note that \cite{BLM} uses a superscript $\star$ where we use $*$.

\subsection{$\boldsymbol *$-Representations as operator modules}\label{subsec:hmod}

Let $H$ be a (nonzero) complex Hilbert space. Fix a unit vector $h_0\in H$, and embed $H$ isometrically as a closed subspace of $\Bounded(H)$ by sending $h\in H$ to the rank-one operator $h_1\mapsto \langle h_0 | h_1\rangle h$.
The \emph{column operator space} structure on $H$, denoted by $H_c$, is the one induced by the above embedding; this structure is independent of the choice of unit vector $h_0$, up to completely isometric isomorphism. Every bounded operator $H\to K$ is completely bounded, with the same norm, when considered as a map $H_c\to K_c$. See \cite[1.2.23]{BLM}.

We shall also consider the \emph{row operator space} structure $H_r$ on $H$, defined (up to completely isometric isomorphism) by $H_r \coloneqq \left( \ol{H}_c\right)^*$, where $\ol{H}$ denotes the complex conjugate Hilbert space.

If $A\to \Bounded(H)$ is a $*$-representation of a $C^*$-algebra $A$ on $H$, then $H_c$ is a left operator module over $A$. The category $\hmod(A)$ of nondegenerate $*$-representations of $A$ (and bounded $A$-linear maps) thus embeds as a full subcategory of $\opmod(A)$. If $A$ is a type $\mathrm{I}$ $C^*$-algebra, then every (topologically) irreducible left operator $A$-module is completely isometrically isomorphic to an irreducible $*$-representation; see \cite[Proposition 2.4]{Crisp-Higson}. (We do not know whether the same is true when $A$ is not type $\mathrm{I}$.)

\subsection{Hilbert $\boldsymbol{C^*}$-modules and correspondences}\label{subsec:corresp}

Recall (from \cite[Chapter 8]{BLM} for instance) that a (left/right) \emph{Hilbert $C^*$-module} over a $C^*$-algebra $B$ is a nondegenerate (left/right) Banach $B$-module whose norm is induced by a positive-definite $B$-valued Hermitian  inner product $\langle\,\cdot\, |\,\cdot\,\rangle$. If $F$ is a Hilbert $C^*$-module over $B$, then $M_n(F)$ is in a natural way a Hilbert $C^*$-module over $M_n(B)$, and the induced norms on the spaces $M_n(F)$ make $F$ into an operator space and an operator $B$-module. If $H$ is a Hilbert space, considered as a right Hilbert $C^*$-module over $\C$, then the operator space structure defined on $H$ in this way is completely isometric to the column space structure $H_c$.

A map $t:F\to E$ of   Hilbert $C^*$-modules over $B$ is \emph{adjointable} if there is a map $s:E\to F$ satisfying $\langle t(f) | e\rangle = \langle f| s(e)\rangle$ for all $f\in F$ and all $e\in E$. Adjointable maps are automatically $B$-linear and completely bounded. In particular, the category $\cmod(B)$ of {left} $C^*$-modules over $B$ (and adjointable maps) embeds as a subcategory of $\opmod(B)$. 

In this paper we shall chiefly be concerned with \emph{right} Hilbert $C^*$-modules. We write $\Adjointable_B(F)$ for the $C^*$-algebra of adjointable operators on such a module $F$, and $\Compact_B(F)$ for the closed, two-sided ideal of \emph{$B$-compact} operators: the latter algebra is, by definition, the closed linear span of the operators 
\begin{equation}\label{eq:rk1_op}
f_1\otimes f_2^* : F\to F, \qquad f\mapsto f_1\langle f_2 | f\rangle 
\end{equation}
for $f_1,f_2\in F$.

Now suppose that $A$ is a second $C^*$-algebra. A \emph{$C^*$-correspondence} from $A$ to $B$ is a right Hilbert $C^*$-module $F$ over $B$, equipped with a $*$-homomorphism $A\to \Adjointable_B(F)$ making $F$ into a nondegenerate left $A$-module. The operator space structure on $F$ defined above makes $F$ into an operator $A$-$B$ bimodule. The adjoint operator space $F^*$ is a $B$-$A$ operator bimodule, but it is not in general a $C^*$-correspondence. (It is a \emph{left} $C^*$-module over $B$, but not a right $C^*$-module over $A$.)

\subsection{The Haagerup tensor product}\label{subsec:haag}

Let $A$ and $B$ be $C^*$-algebras, let $F$ be an operator $A$-$B$ bimodule, and let $X$ be a left operator $B$-module. A bilinear map $u:F\times X\to Z$ into an operator space $Z$ is said to be \emph{$B$-balanced} if $u(fb,x)=u(f,bx)$ for every $f\in F$, $b\in B$ and $x\in X$, while $u$ is said to be \emph{completely contractive} if, for every $n\geq 1$, the bilinear map
\[ 
u_n: M_n(F)\times M_n(X)\to M_n(Z), \qquad \left(u_n(f,x)\right)_{i,j} = \sum_{k} u(f_{i,k},x_{k,j})
\]
satisfies $\|u_n(f,x)\|_n \leq \|f\|_n \cdot \|x\|_n$. Composition of bounded Hilbert space operators is an example of a completely contractive bilinear map, and essentially the only example: see \cite[Theorem 1.5.7]{BLM}. 
 
The \emph{Haagerup tensor product} $F\otimes^h_B X$ is a left operator $A$-module, which is a completed quotient of the algebraic (balanced) tensor product $F\otimes^\alg_B X$, and which is characterised by the property that for each completely contractive, $B$-balanced bilinear map $u:F\times X\to Z$, the linear map 
\[
F\otimes^\alg_B X \to Z, \qquad f\otimes x \mapsto u(f,x) 
\]
extends to a completely contractive map of operator spaces $F\otimes^h_B X\to Z$.  See \cite[Sections 1.5 \& 3.4]{BLM}.

If $X$ is a left operator $B$-module, then the action map 
\[
B\otimes^{\alg}_B X\to X   \qquad b\otimes x\mapsto bx 
\]
extends to a completely isometric map $B\otimes^h_B X\to X$, whose image is precisely the nondegenerate part $B\cdot X$ of $X$.

The Haagerup tensor product is functorial: for each completely bounded module map $t\in \CB_B(X,Y)$, the map
\[
\id_F\otimes t : F\otimes^\alg_B X \to F\otimes^\alg_B Y,\qquad f\otimes x \mapsto f\otimes t(x ) 
\]
extends to a completely bounded $A$-module map $F\otimes^h_B X \to F\otimes^h_B Y$, 
with 
\[
\| \id_F\otimes t\|_{\cb}\leq \|t\|_{\cb}. 
\]
See \cite[3.4.5]{BLM}. 
It follows from this that the Haagerup tensor product with $F$ defines a functor 
\[
\functor{F}:\opmod(B)\to \opmod(A) \qquad X\mapsto F\otimes^h_B X,
\]
whose maps on morphism spaces are linear and contractive. These maps are in fact completely contractive, as is already well-known (this is asserted in \cite[8.2.19]{BLM}, for instance). Since we were not able to find a proof of this fact in the literature, we have included one as an appendix to this paper.

The tensor product functors 
$\functor{F}$ and $\functor{F}^*$ 
associated to a $C^*$-correspondence $F$ and to its adjoint operator bimodule $F^*$ will be the main objects of study in this paper. Here is a simple example.

\begin{example}\label{ex:predual}
Let $A\to \Bounded(H)$ be a nondegenerate $*$-representation, and consider $H$ as a $C^*$-correspondence from $A$ to $\C$. The functor 
\[
\functor{H}:\opmod(\C)\to \opmod(A)\qquad X\mapsto H_c\otimes^h X
\]
sends the row operator space $\overline{H}_r$, for example, to the $C^*$-algebra $\Compact_{\C}(H)$ of compact operators on $H$ (which is an $A$-module in the obvious way). 
The operator-theoretic adjoint 
\[
\functor{H}^*: \opmod(A)\to \opmod(\C)\qquad Y\mapsto \ol{H}_r\otimes^h_A Y
\]
sends the operator $A$-module $H_c$, for example, to the predual $\Bounded_A(H)_*$ of the von Neumann algebra $\Bounded_A(H)=A'\cap \Bounded(H)$. (Preduals of von Neumann algebras are operator spaces in a canonical way.) See \cite[1.4 \& 1.5.14]{BLM}.
\end{example}

The following two theorems of Blecher demonstrate that the functors $\functor{F}$ and $\functor{F}^*$, defined using the Haagerup tensor product of operator spaces, are also very relevant to the study of Hilbert-space representations and Hilbert $C^*$-modules:

\begin{theorem}\label{thm:Rieffel}\cite[Theorem 8.2.11]{BLM}.
Let $A$ and $B$ be $C^*$-algebras, and let $F$ be a $C^*$-correspondence from $A$ to $B$. For every Hilbert space representation $X\in\hmod(B)$, and every left Hilbert $C^*$-module $Y\in \cmod(A)$, the identity maps on the algebraic tensor products extend to completely isometric isomorphisms  
\[
F\otimes^h_B X \cong F\otimes^{C^*}_B X \quad \text{and}\quad F^*\otimes^h_A Y \cong F^*\otimes^{C^*}_A Y
\]
between the Haagerup tensor product and the (interior) Hilbert $C^*$-module tensor product of \cite{Rieffel_induced}.
 \hfill\qed
\end{theorem}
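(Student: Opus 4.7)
\emph{Plan.} I would prove Theorem \ref{thm:Rieffel} by showing that the identity map on the algebraic tensor product $F \otimes^{\alg}_B X$ is isometric for both norms, and similarly for $F^* \otimes^{\alg}_A Y$. I focus on the first isomorphism; the second is obtained from a symmetric argument, exploiting the left Hilbert $C^*$-module structure on $F^*$ over $B$ noted in Section \ref{subsec:corresp}.

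\emph{Upper bound for the $C^*$-norm.} The canonical bilinear map $\mu : F \times X \to F \otimes^{C^*}_B X$ is $B$-balanced by construction. To see it is completely contractive into the column operator-space structure on the Hilbert space $F \otimes^{C^*}_B X$, I compute the squared matrix norm of $\mu_n((f_{ij}), (x_{ij}))$ for $(f_{ij}) \in M_n(F)$ and $(x_{ij}) \in M_n(X)$: it is a quadratic form in the entries $(x_{ij})$ with coefficients given by the amplified $B$-representation on $M_n(X)$ applied to the positive Gram matrix $M = (\langle f_{ij} | f_{kl}\rangle) \in M_{n^2}(B)_+$. Positivity of $M$ together with the identity $\|M\|_{M_{n^2}(B)} = \|(f_{ij})\|^2_{M_n(F)}$ (a basic compatibility between Hilbert $C^*$-module and operator-space structures, \cite[8.2.4]{BLM}) yields the desired matrix-norm estimate. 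The universal property of $\otimes^h_B$ then produces a completely contractive $A$-module map $\Phi : F \otimes^h_B X \to F \otimes^{C^*}_B X$ extending the identity on the algebraic tensor product.

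\emph{Lower bound for the $C^*$-norm.} To show $\Phi$ is a complete isometry, I would use a Stinespring-type concrete realization. Choose a faithful nondegenerate representation $\pi : B \hookrightarrow \Bounded(K)$ containing the given representation on $X$ (e.g.\ $K = X \oplus X_{\mathrm{univ}}$ for any faithful $X_{\mathrm{univ}}$). The formula $f \mapsto L_f$, with $L_f(k) := f \otimes k \in F \otimes^{C^*}_B K$, defines a completely isometric embedding $F \hookrightarrow \Bounded(K, F \otimes^{C^*}_B K)$, since faithfulness of $\pi$ gives $\|L_f\|^2 = \|\pi(\langle f | f\rangle)\| = \|f\|^2$, and the same reasoning at the matrix level delivers complete isometry. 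Composing these operators with the canonical embedding $K_c \hookrightarrow \Bounded(\mathbb{C}, K)$, and using the concrete realization of the Haagerup tensor product via operator composition, one obtains a completely isometric realization of $F \otimes^h_B K$ inside $(F \otimes^{C^*}_B K)_c$; this inverts $\Phi$ for $K$ in place of $X$. Transferring the isometry down to $F \otimes^{\alg}_B X$ along the inclusion $X \hookrightarrow K$ then gives the required inverse inequality.

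\emph{Main obstacle.} The principal difficulty is the last transfer step: one must verify that the Haagerup and $C^*$-module norms on $F \otimes^{\alg}_B X$ coincide with the restrictions of the corresponding norms on $F \otimes^{\alg}_B K$. For the interior $C^*$-module tensor product this is clear from the inner-product formula, but for the Haagerup tensor product it requires careful analysis of how the tensor norm interacts with passage to invariant subspaces of the second factor, relying on the factorization theorems for completely bounded multilinear maps (\cite[\S1.5]{BLM}). A related delicacy is that when the $B$-action on $X$ has a kernel, one must ensure that both norms factor uniformly through the corresponding quotient, so that the faithful case actually implies the general one.
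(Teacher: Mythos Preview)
The paper does not supply its own proof of this statement: Theorem~\ref{thm:Rieffel} is quoted verbatim from \cite[Theorem 8.2.11]{BLM} and closed with a bare \qed. There is therefore nothing in the paper to compare your argument against.

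On the merits of your sketch: the overall architecture (upper bound via the universal property of $\otimes^h_B$, lower bound via a concrete operator realisation of $F$) is the standard one, and is essentially how Blecher's proof proceeds. The obstacle you flag in the final paragraph is genuine---the \emph{module} Haagerup tensor product is not known to be injective in the second variable, so the passage from $K$ back to the subrepresentation $X$ is not automatic. The usual way around this is to avoid the embedding altogether and establish the lower bound directly on $F\otimes^{\alg}_B X$: given $z=\sum_i f_i\otimes x_i$, one uses the polar factorisation of the row $[f_1,\dots,f_n]$ inside the Hilbert $C^*$-module $F^n$ to rewrite $z=\sum_k g_k\otimes y_k$ with $\|[g_k]\|\leq 1$ and $\|[y_k]\|$ equal to the $C^*$-norm of $z$ (the $y_k$ are obtained by pushing the square root of the Gram matrix $(\langle f_i|f_j\rangle)$ onto the $x$'s via the $B$-action on $X$). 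This gives $\|z\|_h\leq\|z\|_{C^*}$ without any faithfulness hypothesis on the $B$-action, and the same manipulation works at the matrix level to yield the complete isometry. Your alternative suggestion---factoring both tensor products through the quotient by the kernel ideal of the $B$-action on $X$---also works, but is less direct.
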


\begin{theorem} \label{thm:tensor_compact} \cite[Corollary 8.2.15]{BLM}.
Let $F$ be a $C^*$-correspondence from $A$ to $B$,  and let $F^*$ denote the adjoint operator $B$-$A$ bimodule. The map from
$F\otimes^\alg_B F^*$ to $\Adjointable_B(F)$ sending $f_1\otimes f_2^*$ to the operator \eqref{eq:rk1_op} extends to a completely isometric isomorphism $F\otimes^h_B F^* \cong \Compact_B(F)$. 
\hfill\qed
\end{theorem}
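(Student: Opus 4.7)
The plan is to show that the map
\[
\mu: F \otimes^{\alg}_B F^* \longrightarrow \Compact_B(F), \qquad f_1\otimes f_2^*\mapsto f_1 \langle f_2 | \cdot \rangle,
\]
which is $B$-balanced thanks to the bimodule relation $b\cdot f_2^* = (f_2 b^*)^*$ on $F^*$ and whose image is dense in $\Compact_B(F)$ by definition, extends to a completely isometric isomorphism $\tilde\mu: F\otimes^h_B F^*\to \Compact_B(F)$.

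I would first verify that $\mu$ is completely contractive, so that it extends to a completely contractive $\tilde\mu$ by the universal property of the Haagerup tensor product. Any $u\in M_n(F\otimes^{\alg}_B F^*)$ admits a representation built from a row $\tilde f\in M_{n,N}(F)$ and a column $\tilde g^*\in M_{N,n}(F^*)$, and the operator $\mu_n(u)$, viewed as an adjointable endomorphism of the Hilbert $M_n(B)$-module $F^n$, factors as $F^n\xrightarrow{\tilde g^*}B^{nN}\xrightarrow{\tilde f}F^n$. Since the operator norms of $\tilde f$ and $\tilde g^*$ as adjointable maps coincide with the matrix norms of their entries in $M_{n,N}(F)$ and $M_{N,n}(F^*)$, we obtain $\|\mu_n(u)\|\leq \|\tilde f\|\cdot\|\tilde g^*\|$; taking the infimum over all such factorizations yields $\|\mu_n(u)\|\leq \|u\|_{M_n(F\otimes^h_B F^*)}$.

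The harder task is the reverse inequality $\|u\|_h\leq\|\tilde\mu(u)\|$, for which I would explicitly construct, for each $T\in \Compact_B(F)$, a preimage $u_T\in F\otimes^h_B F^*$ with $\|u_T\|_h\leq \|T\|$. Let $(e_\lambda)$ be a positive, increasing approximate identity for $\Compact_B(F)$ with each $e_\lambda = \sum_{i\in I_\lambda}f_{\lambda,i}\otimes f_{\lambda,i}^*$ of norm at most one, and set
\[
u_\lambda := \sum_{i\in I_\lambda} f_{\lambda,i}\otimes (T^*f_{\lambda,i})^*\in F\otimes^{\alg}_B F^*.
\]
A short calculation identifies $\tilde\mu(u_\lambda) = e_\lambda T$, and the Haagerup-norm estimate yields
\[
\|u_\lambda\|_h^2 \leq \|e_\lambda\|\cdot\|T^*e_\lambda T\|\leq \|T\|^2.
\]
Arranging the approximate identity so that the index sets are nested (with $f_{\lambda,i}$ unchanged in passing to larger $\lambda$), the difference $u_{\lambda'}-u_\lambda$ is again a sum of the same form over $I_{\lambda'}\setminus I_\lambda$, and the analogous estimate gives
\[
\|u_{\lambda'}-u_\lambda\|_h^2 \leq \|e_{\lambda'}-e_\lambda\|\cdot \|T^*(e_{\lambda'}-e_\lambda)T\| \leq \|T\|\cdot\|(e_{\lambda'}-e_\lambda)T\|\longrightarrow 0,
\]
since $e_\lambda T\to T$ in norm. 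Hence $(u_\lambda)$ is Cauchy in the complete space $F\otimes^h_B F^*$, and its limit $u_T$ satisfies $\tilde\mu(u_T) = T$ and $\|u_T\|_h \leq \|T\|$. Running the same argument inside $M_n(\Compact_B(F)) = \Compact_{M_n(B)}(F^n)$, using the identification of $M_n(F\otimes^h_B F^*)$ with $F^n\otimes^h_{M_n(B)}(F^n)^*$, promotes isometry to complete isometry.

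The main obstacle is arranging the nesting of the approximate identity. This is straightforward when $F$ is countably generated, using a standard frame furnished by Kasparov's stabilisation theorem; the general case reduces to this, since any $T\in \Compact_B(F)$ has range contained in a countably generated submodule of $F$ on which $T$ still acts compactly.
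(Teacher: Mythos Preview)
The paper does not actually prove this theorem: it is quoted verbatim from \cite[Corollary~8.2.15]{BLM} and closed with a \qed\ box, so there is no ``paper's own proof'' to compare against. What follows is therefore an assessment of your argument on its own merits.

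Your strategy---complete contractivity of $\mu$ via the universal property, then an explicit Haagerup-norm-controlled preimage for each $T\in\Compact_B(F)$ built from a frame---is sound and will work. Two points deserve tightening. First, in the matrix-level step you write $F^n\otimes^h_{M_n(B)}(F^n)^*$, but the correct identification is $M_n(F\otimes^h_B F^*)\cong M_{n,1}(F)\otimes^h_B M_{1,n}(F^*)=F^n\otimes^h_B (F^n)^*$, balanced over $B$ rather than $M_n(B)$; with that correction your reduction of complete isometry to the scalar case (applied to the right $B$-module $F^n$, whose compacts are $M_n(\Compact_B(F))$) goes through. Second, the reduction to the countably generated case is a little thin: you need a countably generated submodule $F_0$ containing the ranges of \emph{both} $T$ and $T^*$ (obtained, say, from a sequence of finite-rank approximants $T_n\to T$ and the vectors appearing in them), so that $T$ genuinely restricts to a compact adjointable operator on $F_0$; and you must then check that the element you build in $F_0\otimes^h_B F_0^*$, pushed into $F\otimes^h_B F^*$ via the inclusion, maps under $\tilde\mu$ to $T$ on all of $F$, not merely on $F_0$. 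This last point follows because each frame partial sum $e_N$ sends $f\in F$ to $\sum_n \xi_n\langle \xi_n\mid Tf\rangle$ with $Tf\in F_0$, hence converges to $Tf$, but it is worth saying.

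For comparison, the argument in \cite{BLM} is organised differently: it exploits the linking $C^*$-algebra of $F$ and the fact that multiplication in a $C^*$-algebra induces a complete quotient map on Haagerup tensor products of its corners, which yields the complete isometry without an explicit frame construction. Your hands-on approach is longer but more self-contained.
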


\subsection{Adjoint functors}\label{subsec:adj}

Turning now from operator theory to category theory, let us briefly review some terminology regarding adjoint functors. See \cite[Chapter IV]{MacLane} for details. (The historical discussion in \cite{MacLane_Stone} is also particularly germane to the present work.) We will assume that the reader is familiar with the language of categories, functors, and natural maps, as explained in \cite[Chapter I]{MacLane}.

\begin{definition} \label{def:adj}
Let $\category{A}$ and $\category{B}$ be categories. A functor $\functor{L}:\category{A}\to\category{B}$ is \emph{left-adjoint} to a functor $\functor{R}:\category{B}\to\category{A}$, and $\functor{R}$ is \emph{right-adjoint} to $\functor{L}$, if there are natural isomorphisms 
\begin{equation}\label{eq:adj_def}
\Hom_{\category{A}}(X, \functor{R}Y) \xrightarrow{\cong} \Hom_{\category{B}}(\functor{L}X, Y)
\end{equation}
for all objects $X\in \category{A}$ and $Y\in\category{B}$.
\end{definition}

The natural isomorphisms \eqref{eq:adj_def} are a priori isomorphisms of sets. In the cases we shall study, the $\Hom$-sets in the categories $\category{A}$ and $\category{B}$ will have some extra structure---they will be Banach spaces, for instance---and one would like to know whether the isomorphisms \eqref{eq:adj_def} preserve that structure. The following general fact about adjoint functors is useful in this regard. See \cite[IV.1]{MacLane} for a proof.

\begin{lemma}\label{lem:unit_counit}
Suppose that $\functor{L}:\category{A}\to \category{B}$ is left adjoint to $\functor{R}:\category{B}\to\category{A}$. There are natural maps 
\[
\eta_X : X \to \functor{R}\functor{L} X \qquad \text{and} \qquad \epsilon_Y : \functor{L}\functor{R} Y \to Y,
\]
for all objects $X\in \category{A}$ and $Y\in \category{B}$, such that the isomorphism \eqref{eq:adj_def} is the composition
\[
\Hom_{\category{A}}(X,\functor{R}Y) \xrightarrow{\functor{L}} \Hom_{\category{B}}(\functor{L}X, \functor{L}\functor{R} Y) \xrightarrow{\psi\mapsto \epsilon_Y\circ\psi} \Hom_{\category{A}}(\functor{L}X, Y),
\]
while the inverse of \eqref{eq:adj_def} is the composition
\[
\Hom_{\category{B}}(\functor{L}X, Y) \xrightarrow{\functor{R}} \Hom_{\category{A}}(\functor{R}\functor{L}X, \functor{R}Y) \xrightarrow{\phi\mapsto \phi\circ \eta_X} \Hom_{\category{A}}(X, \functor{R}Y). 
\]
\hfill\qed
\end{lemma}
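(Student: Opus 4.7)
The plan is to \emph{define} the unit and counit by feeding identity morphisms into the adjunction bijection, and then to deduce both their naturality and the two composition formulas directly from the naturality of the bijection itself, by the standard trick of chasing identities around naturality squares.

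Write $\Phi_{X,Y}:\Hom_{\category{A}}(X,\functor{R}Y)\xrightarrow{\cong}\Hom_{\category{B}}(\functor{L}X,Y)$ for the given isomorphism, and set
\[
\eta_X := \Phi_{X,\functor{L}X}^{-1}(\id_{\functor{L}X})\colon X\to \functor{R}\functor{L}X,\qquad \epsilon_Y := \Phi_{\functor{R}Y,Y}(\id_{\functor{R}Y})\colon \functor{L}\functor{R}Y\to Y.
\]
Naturality of $\Phi$ in the first slot with respect to a morphism $h:X\to X'$, applied with $\functor{L}X'$ in the second slot, gives a commutative square in which $\id_{\functor{L}X'}$ sits in the upper-right corner; chasing this identity around the square and using functoriality of $\functor{L}$ yields $\functor{R}\functor{L}h\circ\eta_X=\eta_{X'}\circ h$, which is the naturality of $\eta$. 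The dual chase, using naturality of $\Phi$ in the second slot and starting from $\id_{\functor{R}Y}$, gives the naturality of $\epsilon$.

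To derive the first composition formula, fix $f:X\to \functor{R}Y$ and apply naturality of $\Phi$ in the first slot with respect to $f$ itself (keeping $Y$ fixed). The resulting square has $\id_{\functor{R}Y}$ in its top-left corner; chasing it clockwise produces $\epsilon_Y\circ \functor{L}f$, while chasing it counter-clockwise produces $\Phi_{X,Y}(f)$, which is exactly the displayed formula $\Phi_{X,Y}(f)=\epsilon_Y\circ\functor{L}f$. For the inverse formula, fix $g:\functor{L}X\to Y$, invoke naturality of $\Phi$ in the \emph{second} slot with respect to $g$ (keeping $X$ fixed), and start the chase from $\eta_X$ in the upper-left corner: clockwise one lands on $g$, counter-clockwise on $\Phi_{X,Y}(\functor{R}g\circ\eta_X)$, which rearranges to $\Phi_{X,Y}^{-1}(g)=\functor{R}g\circ\eta_X$.

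There is no genuine obstacle here; the proof is pure categorical bookkeeping, and every step is forced once $\eta$ and $\epsilon$ are defined as above. The only point requiring attention is to invoke naturality of $\Phi$ in the correct variable for each of the two formulas, which is possible because the hypothesis that $\Phi$ is natural means natural in \emph{both} variables simultaneously.
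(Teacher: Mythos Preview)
Your argument is correct and is exactly the standard one: define $\eta$ and $\epsilon$ by applying the adjunction bijection to identity morphisms, then read off naturality and the two composition formulas from the naturality of $\Phi$ in each variable. The paper does not actually prove this lemma---it simply refers the reader to \cite[IV.1]{MacLane}---so there is nothing to compare against beyond observing that what you have written is precisely the argument one finds there.
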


The natural transformation $\eta$ is called the \emph{unit} of the adjunction, while $\epsilon$ is the \emph{counit}. Lemma \ref{lem:unit_counit} allows one to deduce properties of the isomorphisms \eqref{eq:adj_def} from properties of the functors $\functor{L}$ and $\functor{R}$. For example:

\begin{lemma}\label{lem:Banach}
Let $\category{A}$ and $\category{B}$ be \emph{Banach categories} (categories in which the $\Hom$ spaces are Banach spaces and the composition of morphisms is bilinear and continuous). Suppose that a functor $\functor{L}:\category{A}\to\category{B}$ is left-adjoint to a functor $\functor{R}:\category{B}\to \category{A}$. If one of the functors is a \emph{Banach functor} (meaning that the induced maps on $\Hom$ spaces are bounded and linear), then the adjunction isomorphisms \eqref{eq:adj_def} are isomorphisms of Banach spaces. 
\end{lemma}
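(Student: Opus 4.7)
The plan is to apply Lemma \ref{lem:unit_counit}, which expresses the adjunction isomorphism and its inverse as explicit compositions built from the functors $\functor{L}$, $\functor{R}$, the unit $\eta$, and the counit $\epsilon$. From such explicit formulas, boundedness and linearity will follow by combining the Banach-functor hypothesis with the hypothesis that composition in each category is bilinear and (jointly) continuous; the open mapping theorem will then promote one-sided boundedness to a two-sided bound.

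More concretely, recall that the forward isomorphism is the map
\[
\Phi_{X,Y}: \Hom_{\category{A}}(X,\functor{R}Y) \to \Hom_{\category{B}}(\functor{L}X,Y),\qquad \varphi\mapsto \epsilon_Y\circ \functor{L}(\varphi),
\]
while the inverse is
\[
\Psi_{X,Y}: \Hom_{\category{B}}(\functor{L}X,Y)\to \Hom_{\category{A}}(X,\functor{R}Y),\qquad \psi\mapsto \functor{R}(\psi)\circ \eta_X.
\]
In any Banach category, post- and pre-composition with a fixed morphism are bounded linear maps, because composition is assumed bilinear and continuous. So if $\functor{L}$ is a Banach functor, then $\varphi\mapsto \functor{L}(\varphi)$ is bounded and linear by hypothesis, and post-composing with the fixed morphism $\epsilon_Y$ preserves these properties; hence $\Phi_{X,Y}$ is bounded and linear. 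Dually, if $\functor{R}$ is a Banach functor, then $\psi\mapsto \functor{R}(\psi)\circ \eta_X$ is bounded and linear, so $\Psi_{X,Y}$ is bounded and linear.

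Finally, $\Phi_{X,Y}$ and $\Psi_{X,Y}$ are set-theoretic inverses between Banach spaces (by the Banach-category hypothesis on the Hom-spaces). If either one is bounded linear, then it is a continuous linear bijection between Banach spaces, so by the open mapping theorem its inverse is automatically bounded linear as well. Therefore $\Phi_{X,Y}$ is an isomorphism of Banach spaces, which is the conclusion of the lemma.

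There is no real obstacle; the only subtlety worth flagging is that the hypothesis covers both cases symmetrically (Banach-ness of $\functor{L}$ versus of $\functor{R}$), and one must invoke the open mapping theorem to transfer boundedness from the direction supplied by the hypothesis to the other direction. Once that is noted, the argument is essentially formal.
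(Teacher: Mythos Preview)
Your proof is correct and follows essentially the same approach as the paper: use Lemma~\ref{lem:unit_counit} to write the adjunction isomorphism and its inverse in terms of the unit and counit, observe that the Banach-functor hypothesis together with continuity of composition makes one direction bounded linear, and then invoke the open mapping theorem to handle the inverse.
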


\begin{proof}
By Lemma \ref{lem:unit_counit}, the isomorphism \eqref{eq:adj_def} is the composition
\[
\Hom_{\category{A}}(X,\functor{R}Y)\xrightarrow{\functor{L}} \Hom_{\category{B}}(\functor{L}X,\functor{L}\functor{R}Y) \xrightarrow{\epsilon_Y\circ} \Hom_{\category{B}}(\functor{L}X,Y).
\]
Since $\category{B}$ is a Banach category, the second arrow in this composition is bounded and linear. If $\functor{L}$ is a Banach functor then the first arrow in the composition is bounded and linear too, and then the open mapping theorem implies that \eqref{eq:adj_def} is an isomorphism of Banach spaces. A similar argument, using the unit of the adjunction, applies when $\functor{R}$ is assumed to be a Banach functor.
\end{proof}

\section{Adjoints to the Haagerup tensor product}\label{sec:adj}

Let $A$ and $B$ be $C^*$-algebras, and let $F$ be a $C^*$-correspondence from $A$ to $B$, considered as an operator $A$-$B$ bimodule. We denote by $F^*$ the adjoint operator $B$-$A$ bimodule.
Haagerup tensor product  with ${F}$ and ${F}^*$ gives functors 
\[
\functor{F}:\opmod(B)\xrightarrow{X\mapsto F\otimes^h_B X} \opmod(A) \quad \text{and}\quad \functor{F}^*:\opmod(A)\xrightarrow{Y\mapsto F^*\otimes^h_A Y}\opmod(B)
\]
on the categories on nondegenerate left operator modules over $A$ and $B$.

In \cite[Theorem 2.17]{Crisp-Higson} it was shown that if the image of the action homomorphism $A\to \Adjointable_B(F)$ is contained in $\Compact_B(F)$---in short, if $A$ acts on $F$ by $B$-compact operators---then the functor $\functor{F}^*$ is left-adjoint to the functor $\functor{F}$. In this section we prove a strong converse to this assertion:

\begin{theorem}\label{thm:adj_main}
Let $A$ and $B$ be $C^*$-algebras, let $F$ be a $C^*$-correspondence from $A$ to $B$, and let $F^*$ be the adjoint $B$-$A$ operator bimodule. The following are equivalent:
\begin{enumerate}[\rm(a)]
    \item  The functor $\functor{F}:\opmod(B)\xrightarrow{X\mapsto F\otimes^h_B X}\opmod(A)$ has a left adjoint.
    \item  The functor $\functor{F}^*: \opmod(A)\xrightarrow{Y\mapsto F^*\otimes^h_A Y} \opmod(B)$ has a \emph{strongly continuous} right adjoint.
    \item  The functor $\functor{F}$ is right-adjoint to the functor $\functor{F}^*$.
    \item  The action of $A$ on $F$ is by $B$-compact operators.
\end{enumerate}
If these conditions are satisfied, then the adjunction isomorphisms 
\[
\CB_B(X,\functor{F}Y) \cong \CB_A(\functor{F}^*X,Y)
\]
are completely contractive.
\end{theorem}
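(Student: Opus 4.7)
The plan is to establish the equivalences as a cycle $(d)\Rightarrow(c)\Rightarrow(a)\Rightarrow(d)$, handle $(b)$ in parallel, and finally derive the complete-contractivity assertion.

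The implication $(d)\Rightarrow(c)$ is already available from \cite[Theorem 2.17]{Crisp-Higson}, which under compactness of the $A$-action constructs the unit and counit of the adjunction $\functor{F}^*\dashv\functor{F}$ using the identification $F\otimes^h_B F^*\cong\Compact_B(F)$ of Theorem \ref{thm:tensor_compact}. The implication $(c)\Rightarrow(a)$ is tautological. For $(c)\Rightarrow(b)$ I would verify that $\functor{F}$, which by (c) is the right adjoint in question, is strongly continuous; this I expect to extract from the general cocontinuity properties of $\otimes^h$ as a bifunctor on operator modules.

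The main work lies in proving $(a)\Rightarrow(d)$. Let $\functor{L}\colon\opmod(A)\to\opmod(B)$ be any left adjoint to $\functor{F}$. By Lemma \ref{lem:Banach} the adjunction isomorphism
\[
\CB_B(\functor{L}Y,X)\;\cong\;\CB_A(Y,F\otimes^h_B X)
\]
is an isomorphism of Banach spaces. My strategy is to probe this isomorphism with the specific test objects $Y=F\in\opmod(A)$ and $X=F^*\in\opmod(B)$. Via Theorem \ref{thm:tensor_compact} the right-hand side becomes $\CB_A(F,\Compact_B(F))$, which carries the canonical family of ``rank-one'' $A$-linear completely bounded maps $\phi_g\colon f\mapsto f\otimes g^*$ indexed by $g\in F$. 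Transporting these maps through the adjunction, invoking naturality in both $Y$ and $X$, and composing with the counit $\epsilon$ at $F^*$, the goal is to extract a completely bounded $A$-linear map $A\to F\otimes^h_B F^*\cong\Compact_B(F)$ which coincides with the left action of $A$ on $F$. This, precisely, is (d).

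The implication $(b)\Rightarrow(d)$ should follow by a parallel argument in which the roles of $\functor{F}$ and $\functor{F}^*$ are interchanged, with the strong continuity hypothesis doing the same work that the adjointness isomorphism did in $(a)\Rightarrow(d)$. Finally, the complete contractivity of the adjunction isomorphism I would obtain by amplifying the entire argument by $n\times n$ matrices: the identifications $M_n(\CB_A(X,Y))=\CB_A(X,M_n(Y))$ combined with naturality of the adjunction in $X$ and $Y$ promote the Banach isomorphism to a completely contractive one. I expect the principal obstacle to be the factorisation step in $(a)\Rightarrow(d)$, namely concretely realising $A\to\Adjointable_B(F)$ as a map into $\Compact_B(F)=F\otimes^h_B F^*$; this will come down to a careful calculation with the unit $\eta_F\colon F\to F\otimes^h_B\functor{L}F$ and the counit $\epsilon_{F^*}\colon\functor{L}(\Compact_B(F))\to F^*$, using the conjugate-linear family $\{\phi_g\}$ to produce enough morphisms for the compactness to be detected.
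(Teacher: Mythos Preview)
Your scheme for $(d)\Rightarrow(c)\Rightarrow(a)$ and $(c)\Rightarrow(b)$ is fine and matches the paper. The real issue is $(a)\Rightarrow(d)$, where your plan is not yet a proof.

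You propose to plug in $Y=F$, $X=F^*$ and use the rank-one maps $\phi_g\colon f\mapsto f\otimes g^*$ in $\CB_A(F,\Compact_B(F))$, then ``extract'' from the adjunction an $A$-linear map $A\to\Compact_B(F)$ agreeing with the action $\alpha$. But the adjunction only parametrises $\CB_A(Y,\functor{F}X)$ by $\CB_B(\functor{L}Y,X)$; it does not tell you that a given map $A\to\Adjointable_B(F)$ factors through $\Compact_B(F)$. Nothing in your sketch produces such a factorisation: the maps $\phi_g$, the unit $\eta_F$, and the counit $\epsilon_{F^*}$ are all objects you already have without assuming the adjoint exists, and you have not said what the adjunction actually buys you. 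The phrase ``enough morphisms for the compactness to be detected'' is where the argument needs to be, and it is empty as written.

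The paper's mechanism is quite different. It takes $Y=A$, $X=B$, obtaining a Banach isomorphism $\Phi\colon\CB_B(\functor{L}A,B)\to\CB_A(A,F)$, and then runs an approximate-unit argument: writing $\eta=\eta_A\in\CB_A(A,F\otimes^h_B\functor{L}A)$ and $t_f=\Phi^{-1}(r_f)$ where $r_f(a)=\alpha(a)f$, one has for any approximate unit $(k_\lambda)$ of $\Compact_B(F)$ the estimate
\[
\|(\alpha(a)-k_\lambda\alpha(a))f\|\le\|\Phi^{-1}\|\cdot\|f\|\cdot\|\eta(a)-(k_\lambda\otimes\id)\eta(a)\|.
\]
The right-hand side tends to zero because $F\otimes^h_B\functor{L}A$ is nondegenerate over $\Compact_B(F)$, so $k_\lambda\alpha(a)\to\alpha(a)$ in operator norm, forcing $\alpha(a)\in\Compact_B(F)$. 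The crucial point---boundedness of $\Phi^{-1}$, which is exactly what the existence of the adjoint provides via Lemma~\ref{lem:Banach}---lets you pass from pointwise convergence on $F$ to norm convergence in $\Adjointable_B(F)$.

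For $(b)\Rightarrow(d)$ you say only ``parallel argument'' with strong continuity ``doing the same work'', but the role of strong continuity is specific and you have not identified it. In the paper one takes $Y=A$, $X=F^*$, obtaining $\CB_B(F^*,F^*)\cong\CB_A(A,\functor{R}F^*)$ as $A$-$\Compact_B(F)$ bimodules; strong continuity of $\functor{R}$ is precisely what transfers right-$\Compact_B(F)$-nondegeneracy from $F^*$ to $\functor{R}F^*$, and hence to $A\cdot\CB_B(F^*,F^*)$, which then yields $k_\lambda^*\alpha(a^*)\to\alpha(a^*)$ in norm.

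For the complete-contractivity statement, the paper simply observes that the unit, counit, and both functors are completely contractive (the last by the appendix), and then invokes Lemma~\ref{lem:unit_counit}. Your amplification idea may also work, but it is less direct.
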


By definition, a functor $\functor{E}:\opmod(B)\to\opmod(A)$ is \emph{strongly continuous} if the maps 
\[
\functor{E}:\CB_B(X,Y) \to \CB_A(\functor{E}X,\functor{E}Y)
\]
are strong-operator continuous on bounded subsets. It is easy to see that the functor of Haagerup tensor product with an operator bimodule possesses this property. It may be possible to replace strong continuity in part (b) with some (a priori) weaker condition, but some sort of assumption on the adjoint is definitely needed, as the following example demonstrates.

\begin{example}
Let $F=H_c$ be a column Hilbert space, considered as a $C^*$-correspondence from $\C$ to $\C$. Then $F^* \cong \overline{H}_r$. For all operator spaces $X$ and $Y$ we have natural isomorphisms
\[
\CB(\overline{H}_r\otimes^h X, Y) \cong \CB(\overline{H}_r\widehat{\otimes} X, Y) \cong \CB(X, \CB(\overline{H}_r, Y)),
\]
where $\widehat{\otimes}$ is the operator space projective tensor product: the first isomorphism holds by \cite[Proposition 1.5.14(1)]{BLM} and the second holds by \cite[(1.50)]{BLM}. Thus the functor $\functor{F}^*:\opmod(\C)\to \opmod(\C)$ always has a right adjoint, while the  left  action of $\C$ on $H_c$ is by compact operators if and only if $H$ is finite-dimensional.
\end{example}

\begin{proof}[Proof of Theorem \ref{thm:adj_main}]
We noted above that the implication (d) implies (c) was proved in \cite{Crisp-Higson}. It was shown there that if $A$ acts on $F$ via a $*$-homomorphism $\alpha:A\to \Compact_B(F)$ then the natural maps 
\[
\eta_Y: Y \xrightarrow{\cong} A\otimes^h_A Y \xrightarrow{\alpha\otimes\id_Y}\Compact_B(F)\otimes^h_A Y \xrightarrow[\cong]{\text{Theorem \ref{thm:tensor_compact}}} F\otimes^h_B F^*\otimes^h_A Y
\]
and
\[
\epsilon_X : F^*\otimes^h_A F \otimes^h_B X \xrightarrow{\langle\,\cdot\, | \,\cdot\,\rangle\otimes \id_X} B\otimes^h_B X \xrightarrow{\cong} X
\]
are the unit and counit of an adjunction. The maps $\eta_Y$ are completely contractive, because $*$-homomorphisms are always completely contractive. The maps $\epsilon_X$ are also completely contractive, because the inner product on a Hilbert $C^*$-module can be expressed as a product of Hilbert space operators (see \cite[8.1.17]{BLM}), and the composition of operators is completely contractive. The functors $\functor{F}$ and $\functor{F}^*$ are also completely contractive, as shown in the appendix, and now Lemma \ref{lem:unit_counit} implies  that the adjunction isomorphisms are complete contractions (cf. the proof of Lemma \ref{lem:Banach}).

It is clear that (c) implies (a). Since Haagerup tensor product functors are strongly continuous, (c) implies  (b) too. So it remains to show that  (a) and (b) each imply (d).

Let us first show that (a) implies (d). Suppose that the functor $\functor{F}$ has a left adjoint $\functor{L}:\opmod(A)\to\opmod(B)$, so that we have natural isomorphisms 
\[
\CB_B(\functor{L}Y,X) \xrightarrow{\cong} \CB_A(Y, F\otimes^h_B X)
\]
for all $X\in\opmod(B)$ and all $Y\in \opmod(A)$. To save space we shall denote the operator $B$-module $\functor{L}A$ by $L$. Putting $X=B$ (recall that $F\otimes^h_B B\cong F$) and $Y=A$ into the above adjunction isomorphism then gives an isomorphism  
\[
\Phi: \CB_B(L,B) \to \CB_A(A,F), \qquad \Phi(t) = [\id_F\otimes t ]\circ \eta 
\]
for some $\eta=\eta_A\in\CB_A(A,F\otimes^h_B L)$ (see Lemma \ref{lem:unit_counit}). The categories $\opmod(A)$ and $\opmod(B)$ are Banach categories, and the functor $\functor{F}$ is a Banach functor, and so $\Phi$ is an isomorphism of Banach spaces (see Lemma \ref{lem:Banach}).

For each $f\in F$, consider the map 
\[
r_f\in \CB_A(A,F) \qquad r_f:a\mapsto \alpha(a)f,
\]
where $\alpha:A\to \Adjointable_B(F)$ denotes the action map. Let us write 
\[
t_f\coloneqq \Phi^{-1}(r_f)\in \CB_B(L,B).
\] 

Fix $a\in A$, and let $\{k_\lambda\}$ be a bounded approximate unit for the $C^*$-algebra $\Compact_B(F)$ of $B$-compact operators on $F$. For each $f\in F$ and each $\lambda$ we have an estimate
\[
\begin{aligned}
  \| (\alpha(a)-k_\lambda\circ \alpha(a) )f\| & = \| r_f(a) - k_\lambda\circ r_f(a)\| \\
  &  =  \| [\id_F\otimes t_f]\circ \eta(a) - k_\lambda\circ [\id_F\otimes t_f]\circ \eta(a) \| \\
 & = \| [\id_F\otimes t_f]\circ \eta(a) - [\id_F\otimes t_f]\circ [k_\lambda\otimes \id_{L}]\circ \eta(a) \| \\
 & \leq \|\id_F\otimes t_f\| \cdot \| \eta(a) - [k_\lambda\otimes \id_{L}]\circ \eta(a) \| \\
 & \leq \|\Phi^{-1}\| \cdot \|r_f\| \cdot \| \eta(a) - [k_\lambda\otimes \id_{L}]\circ \eta(a) \| \\
 &= \left( \|\Phi^{-1}\| \cdot  \| \eta(a) - [k_\lambda\otimes \id_{L}]\circ \eta(a) \| \right) \cdot \|f\|.
 \end{aligned}
\]
Now $F$ is nondegenerate as a left $\Compact_B(F)$-module, and so $F\otimes^h_B L$ is also nondegenerate as a left $\Compact_B(F)$-module. Thus 
\[
\| \eta(a) - [k_\lambda\otimes \id_{L}]\circ \eta(a) \| \xrightarrow[\ \lambda\to\infty\ ]{} 0.
\]
The computation above shows, therefore, that the operator norm
\[ 
\| \alpha(a)-k_\lambda\circ \alpha(a)\|  \xrightarrow[\ \lambda\to\infty\ ]{} 0.
\] 
Since $\alpha(a)\in \Adjointable_B(F)$, and $k_\lambda$ lies in the closed two-sided ideal $\Compact_B(F)\subseteq \Adjointable_B(F)$, the above convergence implies that $\alpha(a)\in \Compact_B(F)$ as claimed.

Now let us show that (b) implies (d). Suppose that the functor $F^*\otimes^h_A$ has a strongly continuous right adjoint $\functor{R}:\opmod(B)\to\opmod(A)$, so that we have natural isomorphisms (of Banach spaces, by Lemma \ref{lem:Banach})
\[
\CB_B(F^*\otimes^h_A Y, X) \xrightarrow{\cong} \CB_A(Y, \functor{R}X)
\]
for all $X\in\opmod(B)$ and all $Y\in \opmod(A)$. Putting $X=F^*$ and $Y=A$, and using the canonical isomorphism $F^*\otimes^h_A A\cong F^*$, we get an isomorphism
\[
\Psi: \CB_B(F^*, F^*) \to \CB_A(A, \functor{R}F^*).
\]
Naturality implies that this is an isomorphism of $A$-$\Compact_B(F)$ bimodules, where the bimodule structure is given on the left-hand side by 
\begin{equation}\label{eq:Fstar_adj1}
a\cdot t \cdot k: h\mapsto r_k\circ t(ha) \qquad (t\in \CB_B(F^*, F^*),\ a\in A,\ k\in \Compact_B(F),\ h\in F^*) 
\end{equation}
and on the right-hand side by 
\begin{equation*}\label{eq:Fstar_adj2}
a \cdot s \cdot k :a'\mapsto \functor{R}(r_k)(s(a'a)) \qquad (s\in \CB_A(A,\functor{R}F^*),\ a,a'\in A,\ k\in \Compact_B(F)),
\end{equation*}
where $r_k\in \CB_B(F^*, F^*)$ denotes right action of $k$ on $F^*$. 

Being an isomorphism of left $A$-modules, $\Psi$ restricts to an isomorphism between the $A$-nondegenerate submodules: 
\[
\Psi: A\cdot \CB_B(F^*, F^*) \xrightarrow{\cong} A\cdot \CB_A(A,\functor{R}F^*).
\]
Now, $A\cdot \CB_A(A,\functor{R}F^*)$ is isomorphic to $\functor{R}F^*$ as an $A$-$\Compact_B(F)$-bimodule (see \cite[Lemma 3.5.4]{BLM}). Since $F^*$ is nondegenerate as a right $\Compact_B(F)$-module, and $\functor{R}$ is strongly continuous, it follows that $\functor{R}F^*$ is also nondegenerate as a right $\Compact_B(F)$-module: indeed,   the bounded approximate unit $\{k_\lambda\}\subset \Compact_B(F)$ converges point-norm to the identity on $F^*$ (by the nondegeneracy of $F^*$), and then the strong continuity of $\functor{R}$ ensures that the same is true for the action of $\{k_\lambda\}$ on $\functor{R}F^*$. Pulling back by $\Psi$, we conclude that $A\cdot \CB_B(F^*, F^*)$ is nondegenerate as a right $\Compact_B(F)$-module.

This implies that for every $a\in A$ we have norm convergence
\begin{equation}\label{eq:ak_cvgce}
(a\cdot \id_{F^*})\cdot k_\lambda \xrightarrow[\ \lambda\to\infty\ ]{} a\cdot \id_{F^*} 
\end{equation}
inside $\CB_B(F^*, F^*)$. Consulting the definition \eqref{eq:Fstar_adj1} of the bimodule structure on $\CB_B(F^*,F^*)$, and the definition \eqref{eq:opadj_bimod} of the bimodule structure on $F^*$, we see that \eqref{eq:ak_cvgce} is equivalent to the assertion that 
\[
k_\lambda^*  \alpha(a^*) \xrightarrow[\ \lambda\to\infty\ ]{} \alpha(a^*)
\]
in the operator norm on $\Adjointable_B(F)$. Since the operators $k_\lambda^*$ all lie in the closed ideal $\Compact_B(F)$ of $\Adjointable_B(F)$, we conclude that the image of $\alpha$ is contained in $\Compact_B(F)$. This concludes the proof that (b) implies (d).
\end{proof}

\section{Applications}\label{sec:cor}

\subsection{Extension of scalars} 

\begin{corollary}\label{cor:pullback}
Let $A$ and $B$ be $C^*$-algebras, and let $\phi:A\to M(B)$ be a nondegenerate $C^*$-homomorphism from $A$ to the multiplier algebra of $B$. The pullback functor 
$\phi^*:\opmod(B)\to \opmod(A)$ admits a left adjoint if and only if $\phi(A)\subseteq B$.
\end{corollary}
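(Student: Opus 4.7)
The plan is to exhibit the pullback functor $\phi^*$ as a Haagerup tensor product functor associated to a particular $C^*$-correspondence, and then to read off the statement from Theorem \ref{thm:adj_main}.

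First, I would take $F = B$ viewed as a $C^*$-correspondence from $A$ to $B$: the right Hilbert $B$-module structure is the standard one, with $\langle b_1 \mid b_2\rangle = b_1^* b_2$, and the left action of $A$ is via $\phi$ composed with the canonical identification $M(B) = \Adjointable_B(B)$. The nondegeneracy hypothesis on $\phi$ means exactly that $\phi(A)\cdot B$ is dense in $B$, which makes $F$ a nondegenerate left operator $A$-module and hence a bona fide $C^*$-correspondence in the sense of Section \ref{subsec:corresp}.

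Next I would identify the resulting Haagerup tensor product functor $\functor{F}:\opmod(B)\to\opmod(A)$, $X\mapsto B\otimes^h_B X$, with $\phi^*$. For every nondegenerate left operator $B$-module $X$, the multiplication map $B\otimes^h_B X \to X$ is a completely isometric isomorphism of operator $B$-modules (see Section \ref{subsec:haag}). A direct check on elementary tensors shows that, under this isomorphism, the left $A$-action on $B\otimes^h_B X$ transports to $a\cdot x = \phi(a)x$, i.e.\ to the pullback action. Naturality in $X$ promotes this to a natural isomorphism $\functor{F} \cong \phi^*$.

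The final step is to invoke the equivalence of (a) and (d) in Theorem \ref{thm:adj_main}: $\functor{F}$ admits a left adjoint if and only if $A$ acts on $F = B$ through $B$-compact operators. But $\Adjointable_B(B) = M(B)$, and the rank-one operator $b_1\otimes b_2^*$ defined in \eqref{eq:rk1_op} acts on $B$ as left multiplication by $b_1 b_2^*$, so $\Compact_B(B) = B$ sitting inside $M(B)$ as the multiplication operators by elements of $B$. Thus the condition ``$A$ acts on $B$ by $B$-compact operators'' reduces exactly to $\phi(A)\subseteq B\subseteq M(B)$, which is the claim.

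There is no serious obstacle here: once the correspondence $F = B$ is identified and the isomorphism $\functor{F}\cong\phi^*$ is verified, everything follows from Theorem \ref{thm:adj_main} and the well-known equalities $\Adjointable_B(B) = M(B)$ and $\Compact_B(B) = B$. The only point requiring a moment's care is confirming that the $A$-action transported through $B\otimes^h_B X \cong X$ coincides with the pullback action, which is a direct unwinding of definitions.
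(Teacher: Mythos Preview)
Your proposal is correct and follows essentially the same approach as the paper: identify $\phi^*$ with the Haagerup tensor product functor $X\mapsto B\otimes^h_B X$ for the correspondence $F=B$, then apply Theorem~\ref{thm:adj_main} together with $\Compact_B(B)=B$. The paper's proof is the same argument, stated more tersely.
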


\begin{proof}
The pullback functor can be identified with the Haagerup tensor product 
\[
Y \mapsto B\otimes^h_B Y ,
\]
where $B$ is a right $C^*$-module over $B$ in the obvious way, and a left $A$-module via the homomorphism $\phi$. The conclusion follows from Theorem \ref{thm:adj_main} and the well-known identification $\Compact_B(B)=B$.
\end{proof}

As a special case of the previous result (putting $A=\C$):

\begin{corollary}
Let $B$ be a $C^*$-algebra. The forgetful functor $\opmod(B)\to\opmod(\C)$ from operator $B$-modules to operator spaces admits a left adjoint if and only if $B$ is unital. The left adjoint, if it exists, is given by $X\mapsto B\otimes^h X$. \hfill\qed
\end{corollary}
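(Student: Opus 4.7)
The plan is to apply Corollary~\ref{cor:pullback} with $A=\C$ and with $\phi:\C\to M(B)$ the canonical unital $*$-homomorphism $\lambda\mapsto\lambda\cdot 1_{M(B)}$. This $\phi$ is automatically nondegenerate, since $\phi(1)=1_{M(B)}$ acts as the identity on every left operator $B$-module, and its pullback functor coincides tautologically with the forgetful functor $\opmod(B)\to\opmod(\C)$: the complex scalar $\lambda$ acts on any $Y\in\opmod(B)$ as $\phi(\lambda)y=\lambda y$, which is just the intrinsic $\C$-vector space structure on $Y$. Corollary~\ref{cor:pullback} then produces a left adjoint precisely when $\phi(\C)\subseteq B$, which is the condition $1_{M(B)}\in B$, i.e.\ that $B$ be unital. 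This gives the biconditional in the statement.

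To identify the left adjoint explicitly as $X\mapsto B\otimes^h X$, I would retrace the proof of Corollary~\ref{cor:pullback}: the pullback is presented there as Haagerup tensor product with $B$ viewed as a $\C$-$B$ bimodule $F$, so Theorem~\ref{thm:adj_main} delivers the left adjoint as $\functor{F}^*:X\mapsto B^*\otimes^h_\C X=B^*\otimes^h X$, where $B^*$ is the operator-theoretic adjoint $B$-$\C$ bimodule. The remaining task is to produce a canonical $B$-$\C$ bimodule isomorphism $B^*\cong B$ when $B$ is unital. The natural candidate is the map $\sigma(\ol{b}):=b^*$ induced by the $C^*$-algebra involution, where $\ol{b}$ denotes the element of $B^*$ corresponding to $b\in B$. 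A short calculation from the bimodule formula~\eqref{eq:opadj_bimod} yields $\sigma(b'\cdot\ol{b})=\sigma(\ol{bb'^*})=(bb'^*)^*=b'b^*=b'\sigma(\ol{b})$ and $\sigma(\lambda\ol{b})=\lambda b^*$, so $\sigma$ is left $B$-linear and $\C$-linear; at the matrix level it is completely isometric by the definition of the operator space structure on $B^*$ recalled from \cite[1.2.25]{BLM}.

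An equally viable route, which sidesteps the adjoint bimodule altogether, is to verify the adjunction $\CB_B(B\otimes^h X,Y)\cong\CB(X,Y)$ by hand in the unital case: a completely bounded $B$-linear map $T$ on the left is determined by the completely bounded map $x\mapsto T(1\otimes x)$ on the right, while any $S\in\CB(X,Y)$ extends to such a $T$ via $b\otimes x\mapsto b\cdot S(x)$ by the universal property of the Haagerup tensor product and the complete boundedness of the $B$-action on $Y$. These assignments are mutually inverse and natural in both variables. I do not foresee any substantive obstacle; the only point requiring attention is the identification $B^*\cong B$ in the unital case, which reduces to routine bookkeeping with \eqref{eq:opadj_bimod}, or else can be bypassed entirely by the direct verification just sketched.
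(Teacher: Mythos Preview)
Your proposal is correct and follows the paper's approach exactly: the paper simply remarks that this corollary is a special case of Corollary~\ref{cor:pullback} with $A=\C$, and gives no further proof. Your additional work identifying the left adjoint explicitly via $B^*\cong B$ (or via the direct verification of the adjunction) is more detail than the paper supplies, but is correct and fills in what the paper leaves implicit.
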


\subsection{Frobenius reciprocity for locally compact groups}

\subsubsection{Operator space representations}

Let $G$ be a locally compact group, and let $H\subseteq G$ be a closed subgroup.  Write $\opmod(G)$ for the category $\opmod(C^*(G))$ of nondegenerate left operator $C^*(G)$-modules. Concretely, an object in $\opmod(G)$ is a norm-closed linear subspace $X\subseteq \Bounded(K)$, where $K$ is a unitary representation of $G$, such that $g\circ x\in X$ for every $g\in G$ and every $x\in X$. Recall that $\opmod(G)$ contains the category $\URep(G)\cong \hmod(C^*(G))$ of unitary representations as a full subcategory.

The restriction functor $\Res^G_H$ on unitary representations extends to a functor on operator modules in the obvious way:
\[
\Res^G_H: \opmod(G)\to \opmod(H).
\]
Rieffel has shown that Mackey's unitary induction functor $\Ind_H^G$ is equivalent to the functor of tensor product (in the sense of Hilbert $C^*$-modules) with a certain $C^*$-correspondence $I_H^G$ from $C^*(G)$ to $C^*(H)$: see \cite[Theorem 5.12]{Rieffel_induced}. Replacing Rieffel's tensor product by the Haagerup tensor product (cf. Theorem \ref{thm:Rieffel}), we obtain an extension of $\Ind_H^G$ to operator modules:
\[
\Ind_H^G:\opmod(H)\to \opmod(G) \qquad X\mapsto I_H^G \otimes^h_{C^*(H)} X.
\]

\begin{corollary}\label{cor:frob_op}
Let $G$ be a locally compact group, and let $H$ be a closed subgroup. 
\begin{enumerate}[\rm(a)]
\item The restriction functor 
$
\Res^G_H:\opmod(G)\to \opmod(H)
$
admits a left adjoint if and only if $H$ is open in $G$. The left adjoint, if it exists, is given by 
\[
X \mapsto C^*(G)\otimes^h_{C^*(H)} X.
\]
\item The unitary induction functor 
$
\Ind_H^G : \opmod(H)\to \opmod(G)
$
admits a left adjoint if and only if $H$ is cocompact in $G$. The left adjoint, if it exists, is given by 
\[
Y \mapsto \left(I_H^G\right)^* \otimes^h_{C^*(G)} Y. 
\]
\end{enumerate}
\end{corollary}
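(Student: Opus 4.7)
My plan is to reduce both parts to the main theorem. For part (a), I identify $\Res^G_H$ with the pullback functor $\phi^*$ along the canonical nondegenerate $*$-homomorphism $\phi : C^*(H) \to M(C^*(G))$ induced by the inclusion $H\hookrightarrow G$. Corollary~\ref{cor:pullback} then reduces the question to whether $\phi(C^*(H)) \subseteq C^*(G)$. This is a classical fact of abstract harmonic analysis that holds precisely when $H$ is open in $G$: openness gives an extension-by-zero map $L^1(H) \hookrightarrow L^1(G)$ that is a $*$-algebra monomorphism extending to $C^*(H) \hookrightarrow C^*(G)$, while if $H$ is merely closed then a bounded approximate unit of $C^*(H)$ is carried by $\phi$ to a multiplier that lies outside $C^*(G)$ (the prototypical case being $H=\{1\} \subseteq G$ non-discrete, where $\phi(1)$ is the multiplier identity of $C^*(G)$). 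The explicit formula for the left adjoint is read off from the proof of Corollary~\ref{cor:pullback}.

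For part (b), the induction functor is by construction the Haagerup tensor product functor attached to the Rieffel correspondence $I_H^G$ from $C^*(G)$ to $C^*(H)$, so Theorem~\ref{thm:adj_main} applies directly and reduces the question to whether the left action $\alpha : C^*(G) \to \Adjointable_{C^*(H)}(I_H^G)$ takes values in the ideal $\Compact_{C^*(H)}(I_H^G)$. The crucial step is to apply Rieffel's imprimitivity theorem to identify $\Compact_{C^*(H)}(I_H^G) \cong C_0(G/H) \rtimes G$, whereupon $\alpha$ becomes the canonical nondegenerate $*$-homomorphism $C^*(G) \to M(C_0(G/H) \rtimes G)$. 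Cocompactness of $H$ then translates into unitality of $C_0(G/H)=C(G/H)$, and the multiplier identity $\mathbf{1}_{G/H}$ absorbs the image of $C^*(G)$ into the ideal $C_0(G/H) \rtimes G$; conversely, the non-compact case offers no such absorption, which can be verified already in the prototype $H=\{1\}$, where the condition reduces to $C^*_\lambda(G) \subseteq K(L^2(G))$ and fails whenever $G$ is not compact. The description of the left adjoint is then immediate from Theorem~\ref{thm:adj_main}.

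The main obstacle will be the careful identification in part (b) of the left action $\alpha$ with the canonical map $C^*(G) \to M(C_0(G/H)\rtimes G)$, together with the multiplier-algebra absorption argument for the ``only if'' direction in the cocompactness dichotomy; the remaining steps are either direct applications of Theorem~\ref{thm:adj_main} and Corollary~\ref{cor:pullback} or else classical facts from abstract harmonic analysis.
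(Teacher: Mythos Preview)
Your approach is essentially identical to the paper's: both parts are reduced to Theorem~\ref{thm:adj_main} (via Corollary~\ref{cor:pullback} for part~(a)), invoking Rieffel's identification $\Compact_{C^*(H)}(I_H^G)\cong C_0(G/H)\rtimes G$ for part~(b) and the characterisation of when $C^*(H)\to M(C^*(G))$ lands in $C^*(G)$ for part~(a). The only difference is bibliographic: the paper cites \cite[Theorem~5.1]{Bekka} for the fact in~(a), and simply asserts in~(b) that the canonical image of $C^*(G)$ lies in $C_0(G/H)\rtimes G$ if and only if $C_0(G/H)$ is unital, whereas you sketch the ``if'' directions and illustrate the ``only if'' directions with the prototype $H=\{1\}$. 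Be aware that those prototypes do not by themselves establish the general ``only if'' statements; you should either supply the general multiplier-algebra argument or, as the paper does, cite the relevant result.
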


\begin{proof}
The restriction functor is (naturally isomorphic to) the pullback along the canonical inclusion $C^*(H)\to M(C^*(G))$. The image of this inclusion is contained in $C^*(G)$ if and only if $H$ is open in $G$, by \cite[Theorem 5.1]{Bekka}, and so part (a) follows from Corollary \ref{cor:pullback}.

Turning to the induction functor, Rieffel \cite{Rieffel_induced} established an isomorphism of $C^*$-algebras
\[
\Compact_{C^*(H)}(I_H^G) \cong C_0(G/H)\rtimes G,
\]
such that the action map $C^*(G)\to \Adjointable_{C^*(H)}(I_H^G)= M(\Compact_{C^*(H)}(I_H^G))$ corresponds to the canonical inclusion of $C^*(G)$ into the multiplier algebra of the crossed product. The image of the latter inclusion is contained in the crossed product itself if and only if the algebra $C_0(G/H)$ is unital---which is to say, if and only if $G/H$ is compact---and so part (b)  follows from Theorem \ref{thm:adj_main}.
\end{proof}

\begin{example}
To revisit the example considered in the introduction, let $H$ be the trivial subgroup, and consider the induction functor 
\[
\Ind_1^G:\opmod(\C)\to \opmod(G) \qquad X\mapsto L^2(G)_c\otimes^h X 
\]
from operator spaces to operator-space representations of $G$. The operator-theoretic adjoint 
\[
\left(\Ind_1^G\right)^*: \opmod(G)\to \opmod(\C) \qquad Y \mapsto \overline{L^2(G)}_r \otimes^h_{C^*(G)} Y
\]
sends the regular representation $L^2(G)$ to the Fourier algebra $A(G)$, as in Example \ref{ex:predual}. This functor is left-adjoint to $\Ind_1^G$ in the categorical sense if and only if $G$ is compact, but it is an interesting functor even when it is not a genuine adjoint.
\end{example}
 
\subsubsection{Unitary representations}\label{subsec:unitary_frob}

Let us contrast the above results with what happens in the setting of unitary representations.

\begin{lemma}\label{lem:unitary_frob}
Let $G$ be a locally compact group and let $H$ be a closed subgroup. 
\begin{enumerate}[\rm(a)]
\item If the functor $\Ind_H^G:\URep(H)\to \URep(G)$ has an adjoint, on either side, then the quasiregular representation $L^2(G/H)$ is isomorphic to a finite direct sum of irreducible representations of $G$.
\item If the functor $\Res^G_H:\URep(G)\to \URep(H)$ has an adjoint, on either side, then one has 
\[
\sum_{Y\in \widehat{G}} \dim Y^H <\infty
\]
where $\widehat{G}$ denotes the set of isomorphism classes of irreducible unitary representations of $G$, and $Y^H$ denotes the space of $H$-fixed vectors in $Y$.
\end{enumerate}
\end{lemma}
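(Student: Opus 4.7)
The plan is to reduce all four cases (left or right adjoint, for both $\Res^G_H$ and $\Ind_H^G$) to a single Banach-space fact: an infinite-dimensional $C^*$-algebra is never isomorphic, as a Banach space, to a Hilbert space. This is a standard observation---any infinite-dimensional $C^*$-algebra contains an infinite-dimensional commutative $C^*$-subalgebra and hence a closed subspace isomorphic to $c_0$, which is non-reflexive, whereas Hilbert spaces are reflexive.

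I will first treat part (b) with a left adjoint. Suppose $\functor{L}:\URep(H)\to \URep(G)$ is left adjoint to $\Res^G_H$, and set $V=\functor{L}\mathbf{1}_H$. The adjunction provides natural isomorphisms
\[
\Hom_G(V,Z)\cong \Hom_H(\mathbf{1}_H, Z|_H) = Z^H \qquad (Z\in\URep(G)),
\]
which are isomorphisms of Banach spaces by Lemma \ref{lem:Banach} (applied to the Banach functor $\Res^G_H$). Specialising to $Z=V$ exhibits the $C^*$-algebra $\End_G(V)$ as Banach-isomorphic to the Hilbert space $V^H$, so the Banach-space fact above will force $\End_G(V)$ to be finite-dimensional. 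Writing this algebra as $\bigoplus_{i=1}^N M_{m_i}(\C)$, its minimal central projections then split $V$ as $V\cong \bigoplus_{i=1}^N W_i\otimes \C^{m_i}$ with the $W_i\in \widehat{G}$ pairwise inequivalent and each $m_i$ finite. Schur's lemma yields $\Hom_G(V,Y)\cong \C^{m_Y}$ for every $Y\in\widehat{G}$, where $m_Y$ is the multiplicity of $Y$ in $V$ (zero unless $Y\in\{W_1,\ldots,W_N\}$); combined with the adjunction this gives $\dim Y^H = m_Y$, so $\sum_{Y\in\widehat{G}}\dim Y^H = \sum_{i=1}^N m_i<\infty$. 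If instead $\Res^G_H$ admits a right adjoint $\functor{R}$, I will apply the same argument to $W=\functor{R}\mathbf{1}_H$, using $\Hom_G(Z,W)\cong \Hom_H(Z|_H,\mathbf{1}_H)$ and noting that the right-hand side is anti-linearly---and therefore Banach-space---isomorphic to the Hilbert space $Z^H$.

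Part (a) will follow the same template, with $L^2(G/H)=\Ind_H^G\mathbf{1}_H$ playing the role of $V$. A right adjoint $\functor{R}$ of $\Ind_H^G$ will yield a Banach isomorphism $\End_G(L^2(G/H))\cong (\functor{R} L^2(G/H))^H$, a Hilbert space; a left adjoint $\functor{L}$ will yield $\End_G(L^2(G/H))\cong \Hom_H(\functor{L}L^2(G/H),\mathbf{1}_H)$, again a Hilbert space up to anti-linear identification. In either case $\End_G(L^2(G/H))$ is finite-dimensional, which is precisely the statement that $L^2(G/H)$ is a finite direct sum of irreducible $G$-representations. The main obstacle I anticipate is the Banach-space fact about $C^*$-algebras; once that is in hand, the rest of the argument is a direct unpacking of the adjunction together with the structure theory of finite-dimensional von Neumann algebras.
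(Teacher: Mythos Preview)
Your proposal is correct and follows essentially the same approach as the paper: apply the adjunction to the trivial representation $\mathbf{1}_H$, identify the intertwining von Neumann algebra $\End_G(\cdot)$ with a Hilbert space via Lemma~\ref{lem:Banach}, and conclude finite-dimensionality from reflexivity. The paper phrases the key Banach-space step as ``a reflexive von Neumann algebra is finite-dimensional'' rather than your $c_0$-embedding argument, and it is somewhat terser in deducing $\sum \dim Y^H<\infty$ from the finite decomposition, but the logic is the same.
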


\begin{proof}
Suppose that $\Ind_H^G$ has a right adjoint $\functor{R}$, so that we have natural isomorphisms 
\begin{equation}\label{eq:unitary_frob_ind}
\Bounded_H(X, \functor{R} Y) \xrightarrow{\cong} \Bounded_G(\Ind_H^G X, Y) 
\end{equation}
for all representations $X\in \URep(H)$ and $Y\in \URep(G)$. Lemma \ref{lem:Banach} implies that \eqref{eq:unitary_frob_ind} is an isomorphism of Banach spaces. 

 Applying \eqref{eq:unitary_frob_ind} to the trivial one-dimensional representation $X=\C_H$ of $H$, and the quasi-regular representation $Y=L^2(G/H)\cong \Ind_H^G \C_H$ of $G$, we get an isomorphism of Banach spaces 
\[
\Bounded_H(\C_H, \functor{R}L^2(G/H) )\cong \Bounded_G(L^2(G/H), L^2(G/H)).
\]
The left-hand side is isomorphic, as a Banach space, to the Hilbert space of $H$-fixed vectors in $\functor{R}L^2(G/H)$, and is in particular reflexive. So the intertwining von Neumann algebra $\Bounded_G(L^2(G/H),L^2(G/H))$ is reflexive as a Banach space, and so is finite-dimensional. Thus $L^2(G/H)$ is a finite direct sum of irreducibles. If $\Ind_H^G$ has a left adjoint $\functor{L}$, then we get an isomorphism 
\[
\Bounded_H(\functor{L}L^2(G/H),\C_H) \cong \Bounded_G(L^2(G/H), L^2(G/H)).
\]
The left-hand side is again a Hilbert space, and so the same argument as above shows that $L^2(G/H)$ is a finite sum of irreducibles.

Now suppose that $\Res^G_H$ has a right adjoint $\functor{R}:\URep(H)\to \URep(G)$. We then have an isomorphism (again, of Banach spaces)
\[
\Bounded_H(\Res^G_H\functor{R}\C_H,\C_H)\cong \Bounded_G(\functor{R}\C_H, \functor{R}\C_H)  
\]
in which the left-hand side is a Hilbert space and the right-hand side is a von Neumann algebra. Hence, by the same reflexivity argument as above,  $\functor{R}\C_H$ is finite sum of irreducible representations of $G$. For every irreducible $Y\in \widehat{G}$ we have 
\[
\dim \Bounded_G(Y, \functor{R}\C_H) = \dim \Bounded_H(\Res^G_H Y, \C_H) = \dim Y^H,
\]
and since $\functor{R}\C_H$ is a finite sum of irreducibles we conclude that $\sum \dim Y^H$ is finite.
\end{proof}

\begin{corollary}\label{cor:unitary_frob}
Let $H$ be a closed subgroup of infinite index in a locally compact group $G$. If $G$ is compact, or if $H$ is normal, then neither $\Ind_H^G$ nor $\Res^G_H$ admits an adjoint, on either side, as functors on unitary representations.
\end{corollary}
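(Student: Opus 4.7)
The plan is to invoke Lemma~\ref{lem:unitary_frob}, which supplies necessary conditions for adjoints to exist in $\URep$, and derive contradictions to these conditions in each of the two hypothesized situations. Any adjoint to $\Ind_H^G$ would, by part~(a), force $L^2(G/H)$ to be a finite direct sum of irreducible unitary representations of $G$; any adjoint to $\Res^G_H$ would, by part~(b), force $\sum_{Y\in\widehat{G}}\dim Y^H<\infty$.

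In the compact case, both conclusions fall to Peter--Weyl. Since $[G:H]=\infty$, the quotient $G/H$ is infinite and compact, so $L^2(G/H)$ is infinite-dimensional; irreducibles of a compact group are all finite-dimensional, so $L^2(G/H)$ is no finite direct sum of them. Peter--Weyl further identifies $\dim Y^H$ with the multiplicity of $Y$ in $L^2(G/H)$, so finiteness of the sum in (b) would likewise force $L^2(G/H)$ to be finite-dimensional, a contradiction.

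In the normal case, the $G$-action on $L^2(G/H)$ factors through left translation by the infinite locally compact group $K=G/H$. For part~(a), the commutant of this action in $\Bounded(L^2(K))$ contains the right-translation operators $\rho(k)$ for $k\in K$; these are linearly independent over distinct group elements, because for any finitely supported nonzero measure $\mu=\sum c_i\delta_{k_i}$ on $K$ one can exhibit $f\in L^2(K)$ (e.g.\ the indicator of a sufficiently small neighborhood of the identity) for which $\sum c_i\rho(k_i)f\neq 0$. Hence the commutant is infinite-dimensional and admits no finite direct-sum decomposition into matrix algebras. For part~(b), $Y^H$ is $G$-stable, so by irreducibility $Y^H\in\{0,Y\}$, giving $\sum_Y\dim Y^H=\sum_{Z\in\widehat{G/H}}\dim Z$; finiteness of the latter would make $\widehat{G/H}$ finite with each member finite-dimensional, so the universal representation would embed $C^*(G/H)$ into a finite-dimensional $C^*$-algebra. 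Since $L^1(G/H)$ embeds injectively into $C^*(G/H)$ via the left regular representation and is infinite-dimensional whenever $G/H$ is infinite, this too is a contradiction.

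The main obstacle I foresee is the normal case: verifying infinite-dimensionality of the commutant for (a), and finite-dimensionality of $C^*(G/H)$ under the hypothesized constraint for (b). Both reduce to standard facts---the linear independence of distinct translation operators on $L^2(K)$, and the triviality of the Jacobson radical of any $C^*$-algebra (so that the direct sum of the irreducible $*$-representations is faithful)---but neither is entirely automatic, and checking them cleanly is where the real work lies.
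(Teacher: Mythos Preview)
Your proposal is correct and follows essentially the same route as the paper: invoke Lemma~\ref{lem:unitary_frob} and contradict each of its two conclusions via Peter--Weyl in the compact case and via the quotient group $G/H$ in the normal case. Where the paper simply asserts that the group von Neumann algebra of $G/H$ is infinite-dimensional and cites Gelfand--Raikov for $\sum_{Z\in\widehat{G/H}}\dim Z=\infty$, you supply the elementary verifications (linear independence of right translations, and the $L^1\hookrightarrow C^*$ embedding); your observation that $Y^H\in\{0,Y\}$ when $H$ is normal gives the equality $\sum_{Y}\dim Y^H=\sum_{Z\in\widehat{G/H}}\dim Z$ where the paper is content with the inequality $\geq$.
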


\begin{proof}
If $G$ is compact then the Peter-Weyl theorem gives an isomorphism 
\[
L^2(G/H) \cong \bigoplus_{Y\in \widehat{G}} Y \otimes \ol{Y}^H,
\] 
where each $Y$ is finite-dimensional. Since $[G:H]=\infty$, $L^2(G/H)$ is infinite-dimensional and so the above sum must contain infinitely many nonzero terms. Now Lemma \ref{lem:unitary_frob} implies that neither $\Ind_H^G$ nor $\Res^G_H$ has an adjoint.

If $H$ is normal in $G$ then the infinite group $G/H$ has an infinite-dimensional von Neumann algebra $\Bounded_{G/H}(L^2(G/H))= \Bounded_G (L^2(G/H))$. So $\Ind_H^G$ cannot have an adjoint. Moreover, we have an inclusion $\widehat{G/H}\subseteq \widehat{G}$, given by viewing representations of $G/H$ as representations of $G$ on which $H$ acts trivially, and the Gelfand-Raikov theorem implies that 
\[
\sum_{Y\in \widehat{G}} \dim Y^H \geq \sum_{Z\in \widehat{G/H}} \dim Z =\infty.
\]
So $\Res^G_H$ doesn't have an adjoint.
\end{proof}

\begin{remark}
We do not know of any example of an infinite-index subgroup $H\subset G$ for which $\Ind_H^G$ or $\Res^G_H$ has an adjoint on unitary representations.
\end{remark}

\subsection{Adjoint correspondences}

Returning to the setting of arbitrary $C^*$-algebras $A$ and $B$, let $F$ be a $C^*$-correspondence from $A$ to $B$, and let $E$ be a $C^*$-correspondence from $B$ to $A$. The functors
\[
\functor{F}:\opmod(B)\xrightarrow{X\mapsto F\otimes^h_B X} \opmod(A) \quad \text{and}\quad \functor{E}:\opmod(A) \xrightarrow{Y\mapsto E\otimes^h_A Y} \opmod(B)
\]
restrict, by Theorem \ref{thm:Rieffel}, to functors
\[
\functor{F}_{\hmod}:\hmod(B)\to \hmod(A)\quad \text{and}\quad \functor{E}_{\hmod}:\hmod(A)\to \hmod(B)
\]
On the other hand, the Haagerup tensor product functors $\functor{F}^*$ and $\functor{E}^*$ associated to the adjoint bimodules $F^*$ and $E^*$ restrict to functors
\[
\functor{F}^*_{\cmod}:\cmod(A)\to \cmod(B) \quad \text{and}\quad \functor{E}^*_{\cmod}:\cmod(B)\to \cmod(A).
\]

Combining Theorem \ref{thm:adj_main} with results from \cite{KPW} and \cite{CCH2}, we have:

\begin{corollary}\label{cor:adjcorresp}
\begin{enumerate}[\rm(a)]
\item The functors $\functor{E}$ and $\functor{F}$ are two-sided adjoints if and only if the functors $\functor{E}^*_{\cmod}$ and $\functor{F}^*_{\cmod}$ are two-sided adjoints. 
\item If $\functor{E}$ is left-adjoint or right-adjoint to $\functor{F}$, then $\functor{E}_{\hmod}$ and $\functor{F}_{\hmod}$ are two-sided adjoints.
\end{enumerate}
\end{corollary}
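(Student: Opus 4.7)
The plan is to exploit Theorem~\ref{thm:adj_main}, which forces any adjoint of $\functor{F}$ to coincide, up to natural isomorphism of functors on $\opmod$, with the operator-theoretic adjoint $\functor{F}^*$. The argument then amounts to transferring this identification, together with its associated unit and counit, down to $\hmod$ or $\cmod$ by way of Theorem~\ref{thm:Rieffel}.

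For part~(b), first suppose that $\functor{E}$ is left-adjoint to $\functor{F}$; the right-adjoint case is symmetric, with the roles of $E$ and $F$ interchanged. Theorem~\ref{thm:adj_main} forces $A$ to act on $F$ by $B$-compact operators and identifies $\functor{E}\cong\functor{F}^*$ as functors on $\opmod$. By Theorem~\ref{thm:Rieffel}, both functors restrict to functors between $\hmod(B)$ and $\hmod(A)$, and since bounded intertwiners between column operator Hilbert spaces are automatically completely bounded of the same norm, the $\opmod$-adjunction descends to a natural isomorphism of Banach spaces
\[
\Bounded_B(\functor{E}_{\hmod}X,Y)\;\cong\;\Bounded_A(X,\functor{F}_{\hmod}Y).
\]
To upgrade this to a two-sided adjunction I would use that $\functor{E}_{\hmod}$ and $\functor{F}_{\hmod}$ are both $*$-functors: the Rieffel tensor product map $\id_F\otimes T$ between Hilbert spaces has Hilbert-space adjoint $\id_F\otimes T^*$, so the anti-linear involution $T\mapsto T^*$ commutes with the functors. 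Applying $T\mapsto T^*$ on both sides of the displayed adjunction then produces a natural isomorphism $\Bounded_A(\functor{F}_{\hmod}Y,X)\cong\Bounded_B(Y,\functor{E}_{\hmod}X)$, which is exactly $\functor{F}_{\hmod}\dashv\functor{E}_{\hmod}$.

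For the forward direction of part~(a), I would apply Theorem~\ref{thm:adj_main} twice to obtain $B$-compactness of the $A$-action on $F$, $A$-compactness of the $B$-action on $E$, and natural isomorphisms $\functor{E}\cong\functor{F}^*$ and $\functor{F}\cong\functor{E}^*$ on $\opmod$. Restricting to $\cmod$, where by Theorem~\ref{thm:Rieffel} the Haagerup tensor product agrees with Rieffel's $C^*$-module tensor product, these isomorphisms become $\functor{F}^*_{\cmod}\cong\functor{E}_{\cmod}$ and $\functor{E}^*_{\cmod}\cong\functor{F}_{\cmod}$. The unit and counit of the adjunction $\functor{F}^*\dashv\functor{F}$ exhibited in the proof of Theorem~\ref{thm:adj_main}---built from the $*$-homomorphism $A\to\Compact_B(F)$, the isomorphism $F\otimes^h_B F^*\cong\Compact_B(F)$ of Theorem~\ref{thm:tensor_compact}, and the inner product on $F$---are all adjointable maps of Hilbert $C^*$-modules, so the adjunction restricts to a $\cmod$-adjunction. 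Running the same analysis for the companion adjunction $\functor{F}\dashv\functor{F}^*$ delivers the two-sided $\cmod$-adjunction between $\functor{E}^*_{\cmod}$ and $\functor{F}^*_{\cmod}$.

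For the reverse direction of part~(a) I would invoke the \cite{KPW} theory of adjoint $C^*$-correspondences: a two-sided $\cmod$-adjunction between $\functor{E}^*_{\cmod}$ and $\functor{F}^*_{\cmod}$ forces the action of $A$ on $F$ to be by $B$-compact operators, the action of $B$ on $E$ to be by $A$-compact operators, and furnishes canonical bimodule identifications $E\cong F^*$ and $F\cong E^*$. With these data in hand, Theorem~\ref{thm:adj_main} immediately promotes the $\cmod$-adjunction back to the desired two-sided $\opmod$-adjunction between $\functor{E}$ and $\functor{F}$. I expect the main technical obstacle of the whole corollary to lie in this last step---carefully matching the \cite{KPW} conventions to ours so as to extract both compactness conditions simultaneously from a categorical $\cmod$-adjunction---whereas everything else is essentially a formal consequence of Theorem~\ref{thm:adj_main} together with Theorems~\ref{thm:Rieffel} and~\ref{thm:tensor_compact}.
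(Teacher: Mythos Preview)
Your proposal is correct and follows essentially the same route as the paper. The paper's proof is slightly more streamlined: for part~(a) it uses the KPW characterisation---$\functor{E}^*_{\cmod}$ and $\functor{F}^*_{\cmod}$ are two-sided adjoints iff $E\cong F^*$ as operator bimodules \emph{and} both actions are compact---as a two-way bridge, rather than restricting the unit/counit by hand in the forward direction as you propose; and for part~(b) it simply cites \cite[Theorem~3.15]{CCH2} for the fact that $E\cong F^*$ alone already yields a two-sided $\hmod$-adjunction, which is precisely your $T\mapsto T^*$ flip argument. So the only difference is that you sketch the content of the references the paper invokes.
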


\begin{proof}
Theorems of Kajiwara, Pinzari and Watatani \cite[Theorem 4.4(2), Theorem 4.13]{KPW}, as reformulated in \cite[Theorem 2.24]{CCH2}, imply that $\functor{E}^*_{\cmod}$ and $\functor{F}^*_{\cmod}$ are two-sided adjoints if and only if 
\begin{enumerate}[\rm(1)]
\item $E\cong F^*$ as operator $B$-$A$ bimodules, and
\item The action of $A$ on $F$ is by $B$-compact operators, and the action of $B$ on $E$ is by $A$-compact operators. 
\end{enumerate}
It was shown in \cite[Theorem 3.15]{CCH2} that the condition (1) alone implies that $\functor{F}_{\hmod}$ and $\functor{E}_{\hmod}$ are two-sided adjoints.

By Theorem \ref{thm:adj_main}, the conditions (1) and (2) together are also equivalent to the functors $\functor{E}$ and $\functor{F}$ being two-sided adjoints, and so part (a) is proved.

If $\functor{E}$ is left-adjoint to $\functor{F}$, then condition (1) holds. Indeed, Theorem \ref{thm:adj_main} implies that $\functor{F}^*$ is also left-adjoint to $\functor{F}$, and by the uniqueness of adjoint functors there is a natural isomorphism $\functor{E}\cong \functor{F}^*$; in particular there is a completely bounded isomorphism 
\[
E\cong \functor{E}(A) \cong \functor{F}^*(A) \cong F^*,
\]
which by naturality is an isomorphism of operator $B$-$A$ bimodules. The same argument applies if we assume $\functor{E}$ to be right-adjoint to $\functor{F}$. This proves part (b).
\end{proof}

\subsection{Parabolic induction}

Let $G$ be a real reductive group, and let $P$ be a proper parabolic subgroup of $G$ with Levi decomposition $P=LN$. For example, take $G=\GL(n,R)$, let $P$ be the subgroup of upper-triangular matrices, let $L$ be the subgroup of diagonal matrices, and let $N$ be the subgroup of unipotent upper-triangular matrices.  Clare \cite{Clare} has constructed a $C^*$-correspondence $C^*_r(G/N)$, from $C^*_r(G)$ to $C^*_r(L)$,  whose associated tensor product functor
\[
\Ind_P^G:\hmod(C^*_r(L)) \to \hmod(C^*_r(G)) \qquad X\mapsto C^*_r(G/N)\otimes_{C^*_r(L)} X
\]
is (naturally isomorphic to) the well-known functor of \emph{parabolic induction} of tempered unitary representations. 

Together with Clare and Higson, we showed in \cite{CCH1} and \cite{CCH2} that the adjoint operator bimodule $C^*_r(N\backslash G) \coloneqq C^*_r(G/N)^*$ is actually a $C^*$-correspondence---its operator space structure is induced by a $C^*_r(G)$-valued inner product---and that the \emph{parabolic restriction} functor $\Res^G_P$ associated to the correspondence $C^*_r(N\backslash G)$ is a two-sided adjoint to $\Ind_P^G$ on the categories $\hmod$. In  \cite{Crisp-Higson} we studied the corresponding functors on operator modules, and we showed that $\Res^G_P$ is left-adjoint to $\Ind_P^G$ in this setting; unlike the results of \cite{CCH1} and \cite{CCH2}, the same is also true for operator modules over the \emph{full} group $C^*$-algebras. The `second adjoint theorem' of Bernstein \cite{Bernstein} asserts that in the related context of smooth representations of $p$-adic reductive groups, parabolic induction and restriction are adjoints on both sides. (Strictly speaking, the functor $\Ind_P^G$ has left adjoint $\Res^G_P$ and right adjoint $\Res_{\overline{P}}^G$, where $\overline{P}$ is the parabolic subgroup opposite to $P$.) It is thus natural to ask whether $\Ind_P^G$ has a right adjoint in the operator-module setting.

\begin{corollary}\label{cor:parabolic}
If $P=LN$ is a proper parabolic subgroup of a real reductive group $G$, then the functor $\Ind_P^G:\opmod(C^*_r(L) )\to \opmod(C^*_r(G) )$  has no strongly continuous right adjoint.
\end{corollary}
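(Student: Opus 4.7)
The strategy is to apply Theorem~\ref{thm:adj_main} to the adjoint correspondence, converting the question of the existence of a right adjoint for $\Ind_P^G$ into a compactness question about a natural $\ast$-homomorphism.

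By the work of \cite{CCH1,CCH2}, the adjoint operator $C^*_r(L)$-$C^*_r(G)$ bimodule $E := C^*_r(N\backslash G) = C^*_r(G/N)^*$ is itself a $C^*$-correspondence, now from $C^*_r(L)$ to $C^*_r(G)$, and its own adjoint $E^*$ coincides with $F = C^*_r(G/N)$ as an operator bimodule. Hence the Haagerup tensor product functor $\functor{E}^*:\opmod(C^*_r(L))\to\opmod(C^*_r(G))$ associated with $E^*$ equals $\Ind_P^G$. Applying the equivalence of conditions (b) and (d) of Theorem~\ref{thm:adj_main} to the correspondence $E$ --- with $C^*_r(L)$ now playing the role of ``$A$'' and $C^*_r(G)$ the role of ``$B$'' --- one obtains: the functor $\Ind_P^G$ admits a strongly continuous right adjoint if and only if the left action of $C^*_r(L)$ on $C^*_r(N\backslash G)$ is by $C^*_r(G)$-compact operators. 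The corollary is therefore equivalent to showing that this compactness \emph{fails} whenever $P$ is a proper parabolic.

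To rule out the compactness, I would follow the template of Corollary~\ref{cor:frob_op}(b). There, Rieffel's identification $\Compact_{C^*(H)}(I_H^G)\cong C_0(G/H)\rtimes G$, together with the canonical inclusion $C^*(G)\hookrightarrow M(C_0(G/H)\rtimes G)$, produced the cocompactness dichotomy via the (non-)unitality of $C_0(G/H)$. Using the explicit construction of $C^*_r(N\backslash G)$ in \cite{Clare,CCH1,CCH2} together with Theorem~\ref{thm:tensor_compact}, the aim is to identify $\Compact_{C^*_r(G)}(C^*_r(N\backslash G))$ as a crossed-product-type algebra carrying a ``function-algebra'' factor that is non-unital precisely when $N$ is nontrivial (i.e., when $P\neq G$), so that the natural $\ast$-homomorphism $C^*_r(L)\to\Adjointable_{C^*_r(G)}(C^*_r(N\backslash G))=M(\Compact_{C^*_r(G)}(C^*_r(N\backslash G)))$ lands in the multiplier algebra but misses the ideal of compacts.

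The principal obstacle is this imprimitivity-style identification of $\Compact_{C^*_r(G)}(C^*_r(N\backslash G))$, together with the verification that the image of $C^*_r(L)$ genuinely lies outside the compact ideal when $P$ is proper. This requires the structural analysis of \cite{Clare,CCH1,CCH2} of the parabolic $C^*$-correspondence and the geometry of $N\backslash G$; once that is in hand, the conclusion follows in direct parallel with Corollary~\ref{cor:frob_op}(b). An alternative route, which avoids pinning down the compact algebra explicitly, is to proceed by contradiction: combined with the compactness in the other direction proved in \cite{Crisp-Higson}, the hypothetical compactness would satisfy both conditions of the Kajiwara--Pinzari--Watatani criterion invoked in the proof of Corollary~\ref{cor:adjcorresp}, and so by Corollary~\ref{cor:adjcorresp}(a) it would upgrade the known one-sided adjunction of \cite{Crisp-Higson} to a two-sided adjunction between $\Ind_P^G$ and $\Res^G_P$ on $\opmod$ and on $\cmod$, which can then be refuted directly using representation-theoretic input about proper parabolic induction in the real reductive setting.
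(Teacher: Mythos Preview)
Your reduction is exactly the paper's: apply Theorem~\ref{thm:adj_main} with $A=C^*_r(L)$, $B=C^*_r(G)$, and $E=C^*_r(N\backslash G)$, so that $\functor{E}^*=\Ind_P^G$ and the existence of a strongly continuous right adjoint is equivalent to the $C^*_r(L)$-action on $C^*_r(N\backslash G)$ being by $C^*_r(G)$-compact operators. For the remaining step---showing this compactness fails when $P$ is proper---the paper does not build an imprimitivity-type identification or argue by contradiction as you outline; it simply cites the explicit structural computations of $C^*_r(G/N)$ and $C^*_r(N\backslash G)$ carried out in \cite{CCH1}, which directly exhibit the non-compactness.
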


\begin{proof}
The explicit computations of the correspondences $C^*_r(G/N)$ and $C^*_r(N\backslash G)$ in \cite{CCH1} show that $C^*_r(L)$ does not act by $C^*_r(G)$-compact operators on $C^*_r(N\backslash G)$ (unless $L=P=G$). Now the corollary follows from Theorem \ref{thm:adj_main}.
\end{proof}

The construction in \cite[Section 4.3]{Crisp-Higson} hints at a possible second adjoint theorem for operator modules over suitably chosen non-self-adjoint subalgebras of the reduced $C^*$-algebras.

\section*{Appendix: The Haagerup tensor product is completely contractive}

Let $A$ and $B$ be $C^*$-algebras, let $F$ be an operator $A$-$B$ bimodule, let $X$ and $Y$ be left operator $B$-modules,  and consider the map
\begin{equation*}
\functor{F}(X,Y):\CB_B(X,Y)\to \CB_A(F\otimes^h_B X, F\otimes^h_B Y) \qquad t\mapsto \id_F\otimes t.
\end{equation*}

The following fact is well known, but we were not able to find a proof in the literature.

\begin{lemma*}
The map $\functor{F}(X,Y)$ is completely contractive. 
\end{lemma*}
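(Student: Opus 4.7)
The plan is to exploit the row--column characterization of matrix norms on the Haagerup tensor product and to reduce complete contractivity to the single-variable contractivity already established (see \cite[3.4.5]{BLM}, recalled above).

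First I would apply the standard completely isometric identification $M_n(\CB_B(X,Y))\cong \CB_B(X,M_n(Y))$, valid for operator modules. Under it, a matrix $[t_{ij}]\in M_n(\CB_B(X,Y))$ corresponds to the completely bounded $B$-linear map $T:X\to M_n(Y)$, $T(x)=[t_{ij}(x)]$, with $\|T\|_{\cb}$ equal to the matrix norm of $[t_{ij}]$; and the matrix $[\id_F\otimes t_{ij}]$ corresponds to the map $T':F\otimes^h_BX\to M_n(F\otimes^h_BY)$ sending $f\otimes x$ to $[f\otimes t_{ij}(x)]_{ij}$. The problem reduces to showing $\|T'\|_{\cb}\le\|T\|_{\cb}$.

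For the main estimate, I would fix $p\ge 1$ and an element $u\in M_p(F\otimes^h_BX)$ of norm less than $1$. By the row--column characterization of the Haagerup tensor norm, $u$ admits, up to a factor $(1+\epsilon)$, a factorization $u=\alpha\odot\beta$ with $\alpha\in M_{p,r}(F)$ and $\beta\in M_{r,p}(X)$ satisfying $\|\alpha\|\,\|\beta\|<1+\epsilon$, where $(\alpha\odot\beta)_{kl}=\sum_m\alpha_{km}\otimes\beta_{ml}$. Identifying $M_p(M_n(F\otimes^h_BY))$ completely isometrically with $M_{pn}(F\otimes^h_BY)$, I would then compute that the $((k,i),(l,j))$-entry of the amplification $T'_p(u)$ equals $\sum_m\alpha_{km}\otimes t_{ij}(\beta_{ml})$. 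The key step is to exhibit this as a row--column product $\tilde\alpha\odot\tilde\beta$ by setting $\tilde\alpha\in M_{pn,rn}(F)$ equal to the block-diagonal matrix with $n$ copies of $\alpha$ down the diagonal, and $\tilde\beta\in M_{rn,pn}(Y)$ equal to the matrix with entries $\tilde\beta_{(m,i'),(l,j)}=t_{i'j}(\beta_{ml})$.

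The estimates on the two factors then follow routinely. The block-diagonal structure immediately gives $\|\tilde\alpha\|=\|\alpha\|$; and after a suitable reindexing, $\tilde\beta$ is precisely the matrix obtained by applying $T:X\to M_n(Y)$ entrywise to $\beta\in M_{r,p}(X)$, so that complete boundedness of $T$ yields $\|\tilde\beta\|\le\|T\|_{\cb}\|\beta\|$. Combining, $\|T'_p(u)\|\le\|\tilde\alpha\|\,\|\tilde\beta\|\le(1+\epsilon)\|T\|_{\cb}$, and letting $\epsilon\to 0$ and $p$ be arbitrary gives the required inequality $\|T'\|_{\cb}\le\|T\|_{\cb}$. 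The main obstacle is the careful index bookkeeping involved in identifying $M_p(M_n(F\otimes^h_BY))$ with $M_{pn}(F\otimes^h_BY)$ and verifying the block-matrix factorization; no genuinely new ideas are needed beyond the universal property of $\otimes^h$ and the contractive functoriality already cited.
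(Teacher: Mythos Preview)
Your proof is correct, and it takes a genuinely different route from the paper's. Both arguments begin by identifying $M_n(\CB_B(X,Y))\cong\CB_B(X,M_n(Y))$ and reducing to the estimate $\|T'\|_{\cb}\le\|T\|_{\cb}$ for the map $T':f\otimes x\mapsto[f\otimes t_{ij}(x)]$. From there the paper proceeds abstractly: it proves as a separate lemma that the shuffle map $\psi:F\otimes^h_B M_n(Y)\to M_n(F\otimes^h_B Y)$ is completely contractive, by rewriting $M_n(Y)$ as $C_n\otimes Y\otimes R_n$ and passing through the minimal tensor product (where the flip $F\otimes C_n\to C_n\otimes F$ is isometric); then $T'$ factors as the contractive map $\functor{F}(X,M_n(Y))$ followed by post-composition with $\psi$. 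Your argument instead works directly with the row--column description of the Haagerup norm and an explicit block-matrix factorization $T'_p(u)=\tilde\alpha\odot\tilde\beta$. Your approach is more elementary and self-contained, avoiding the detour through $\otimes^{\mathrm{min}}$; the paper's approach has the virtue of isolating the complete contractivity of $\psi$ as a reusable standalone fact. One small remark: the finite factorization $u=\alpha\odot\beta$ with $\|\alpha\|\,\|\beta\|<1+\epsilon$ is, strictly speaking, available for $u$ in the image of the algebraic tensor product; but since each $\id_F\otimes t_{ij}$ is already known to be completely bounded, $T'_p$ is continuous and a density argument closes this gap.
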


\begin{proof}
The map is contractive, by \cite[Lemma 3.4.5]{BLM}. To show that it is completely contractive we need to consider the amplifications
\[
\functor{F}(X,Y)_n : \CB_B(X, M_n(Y)) \to \CB_A(F\otimes^h_B X, M_n(F\otimes^h_B Y))
\]
defined by 
\[
\functor{F}(X,Y)_n [t_{ij}] : f\otimes x \mapsto [f\otimes t_{ij}(x)].
\]

We claim that the map
\[
\psi: F\otimes^{\alg}_B M_n(Y) \to M_n(F\otimes^{\alg}_B Y) \qquad f\otimes [y_{ij}] \mapsto [f\otimes y_{ij}]
\]
extends to a completely contractive map 
\[
F\otimes^h_B M_n(Y) \to M_n(F\otimes^h_B Y).
\]
To see this, first ignore the $B$-balancing, and write $\psi$ as 
\[ 
\psi: F\otimes^\alg C_n \otimes^\alg Y \otimes^\alg R_n \to C_n \otimes^\alg F\otimes^\alg Y \otimes^\alg R_n, \qquad \psi = \operatorname{flip}\otimes \id\otimes \id.
\]
(Here $C_n=(\C^n)_c$ and $R_n=(\C_n)_r$ denote respectively the column- and the row-operator space structures on $\C^n$. Multiplying columns by rows gives an isomorphism $C_n\otimes^\alg V\otimes^\alg R_n\cong M_n(V)$ for each complex vector space $V$.)

The claim thus boils down to verifying that the flip $F\otimes C_n \to C_n \otimes F$ induces a complete contraction on the Haagerup tensor products. This is true, because
\begin{enumerate}[--]
\item The identity map on $F\otimes^\alg C_n$ induces a complete contraction from $F\otimes^h C_n$ to $F\otimes^{\mathrm{min}}C_n$, where $\otimes^{\mathrm{min}}$ is the minimal operator space tensor product; see \cite[Proposition 1.5.13]{BLM}.
\item The flip map on $\otimes^\alg$ induces a complete isometry on $\otimes^{\mathrm{min}}$; see \cite[1.5.1]{BLM}.
\item The identity on $C_n\otimes^\alg F$ extends to a complete isometry from $C_n\otimes^{\mathrm{min}} F$ to $C_n\otimes^h F$; see \cite[1.5.14(3)]{BLM}.
\end{enumerate}

Now the composition of the completely contractive map $\psi$ with the completely contractive quotient map $M_n(F\otimes^h Y)\to M_n(F\otimes^h_B Y)$ is a completely contractive, $B$-balanced map
\[
X\otimes^h M_n(Y) \to M_n(X\otimes^h_B Y).
\]
By the universal property of the Haagerup tensor product, the induced map
\[ X\otimes^h_B M_n(Y) \to M_n(X\otimes^h_B Y)\]
is also completely contractive.

Now $\functor{F}(X,Y)_n$ factors as the composition of two contractive maps, 
\begin{multline*}
\CB_B(X, M_n(Y)) \xrightarrow{\functor{F}(X,M_n(Y))} \CB_A(F\otimes^h_B X, F\otimes^h_B M_n(Y) )\\ \xrightarrow{\ t\mapsto \psi\circ t\ } \CB_A(F\otimes^h_B X, M_n(F\otimes^h_B Y)),
\end{multline*}
and so $\functor{F}(X,Y)_n$ is itself contractive.
\end{proof}

\bibliographystyle{alpha}
\bibliography{frob-haag}

\end{document}